\theoremstyle{plain}
\newtheorem{theorem}{Theorem}
\theoremstyle{remark}
\newtheorem{remark}{Remark}
\DeclareMathOperator*{\argmin}{arg\,min}
\numberwithin{equation}{section}
\begin{document}

\title[Error estimates and adaptivity for the Monge-Amp\`ere equation]{Error estimates and adaptivity for a least-squares method applied to the Monge-Amp\`ere equation}

\author[A. Caboussat]{Alexandre Caboussat}
\address{
Geneva School of Business Administration (HEG-GEN\`EVE), 
University of Applied Sciences and Arts Western Switzerland (HES-SO), 
1227 Carouge, Switzerland, 
Email : \texttt{alexandre.caboussat@hesge.ch} 
}

\author[A. Peruso]{Anna Peruso}

\address{
Institute of Mathematics, Ecole Polytechnique F\'ed\'erale de Lausanne, 
1015 Lausanne, Switzerland, 
Email : \texttt{anna.peruso@epfl.ch}
and
Geneva School of Business Administration (HEG-GEN\`EVE), 
University of Applied Sciences and Arts Western Switzerland (HES-SO), 
1227 Carouge, Switzerland, 
Email : \texttt{anna.peruso@hesge.ch} 
}

\author[M. Picasso]{Marco Picasso}
\address{
Institute of Mathematics, Ecole Polytechnique F\'ed\'erale de Lausanne, 
1015 Lausanne, Switzerland, 
Email : \texttt{marco.picasso@epfl.ch}
}

\keywords{
Monge-Amp\`ere equation, least-squares method, biharmonic problem, finite element, a posteriori error estimates, mesh refinement
}

\date{\today}

\begin{abstract}
We introduce novel a posteriori error indicators for a nonlinear least-squares solver for smooth solutions of the Monge–Ampère equation on convex polygonal domains in $\mathbb{R}^2$. At each iteration, our iterative scheme decouples the problem into (i) a pointwise nonlinear minimization problem and (ii) a linear biharmonic variational problem. For the latter, we derive an equivalence to a biharmonic problem with Navier boundary conditions and solve it via mixed piecewise-linear finite elements. Reformulating this as a coupled second‐order system, we derive a priori and a posteriori $\mathbb{P}_1$ finite element error estimators and we design a robust adaptive mesh refinement strategy. Numerical tests confirm that errors in different norms scale appropriately. Finally, we demonstrate the effectiveness of our a posteriori indicators in guiding mesh refinement. 
\end{abstract}

\date{\today}

\maketitle


\section{Introduction}\label{sec:intro}
In its classical formulation, the elliptic Monge-Amp\`ere equation reads \cite{dephilippis}
\begin{equation*}
 \text{\rm det}\:  D^2 u(x) = f(x, u, \nabla u) \quad x\in \Omega,   
 \end{equation*}
where $\Omega \subset \mathbb{R}^2$ denotes an open set, $u: \Omega \rightarrow \mathbb{R}$ is a convex function and $D^2 u$ its Hessian matrix, and $f: \Omega \times \mathbb{R} \times \mathbb{R}^2 \rightarrow \mathbb{R}^+$ is a given positive function. This fully nonlinear partial differential equation (PDE) governs the product of the eigenvalues of the Hessian matrix of $u$, unlike the \textit{standard} elliptic equation $-\Delta u = f$, which governs the sum of the eigenvalues. If $f\geq 0$, the convexity of the solution $u$ is a crucial condition for the equation to be (degenerate) elliptic, which is a necessary hypothesis for regularity results. Smoothness of $\Omega$ and $f$ are necessary to ensure existence of solutions in $C^2(\Bar{\Omega})$ \cite{dephilippis}. The Monge-Amp\`ere equation appears in various contexts, such as the prescribed Gaussian curvature equation (also known as the Minkowski problem). It also finds applications in fields like meteorology (modeling air and water flows in the troposphere) and fluid mechanics (determining wind velocity fields given a pressure field) \cite{feng}. Moreover, Monge-Amp\`ere type equations play a pivotal role in the theory of regularity and singularity of optimal transport maps \cite{dephilippis,villani}. 

Due to its growing importance as a fundamental example of fully nonlinear PDEs with a wide range of applications, many numerical techniques have been developed in recent decades to approximate its solutions. Although one might naturally attempt to apply discretization methods that work well for linear and quasi-linear PDEs, such approaches are generally unsuitable for fully nonlinear second-order PDEs and integration by parts cannot be used to transfer \textit{hard-to-control} derivatives onto the test function to form a variational formulation in a weaker Sobolev space. Nevertheless, several Galerkin-based methods have been proposed. For instance, the $L^2$ projection method \cite{bohmer,brenner,brenner2}, the vanishing moment method \cite{neilan}, the nonvariational finite element method \cite{lakkis} and the augmented Lagrangian approach \cite{lagrangian} have all been successfully applied. In this work, we analyse the nonlinear least-squares method proposed in \cite{caboussat} and further developed in \cite{prins,yadav,dimitrios}. The method has been proposed to approximate solutions in $H^2(\Omega)$ to second order fully nonlinear PDEs and it is based on a least-squares formulation of the PDE and a decoupling of the nonlinearity and of the differential operator. This decoupling leads to a system where the nonlinear component is solved pointwise and the fourth-order linear PDE is addressed separately, with the overall solution iteratively obtained by alternating between these two subproblems until convergence is reached.

Previous approaches have solved the linear subproblem using a conjugate gradient algorithm in Hilbert spaces combined with a mixed $\mathbb{P}_1$ finite element approximation, which proved to be the computational bottleneck. In this work, we propose a direct finite element solver for a fourth-order subproblem, thereby eliminating the need for a conjugate gradient step and significantly reducing the overall computational cost. To improve the approximation of the Hessian at each iteration, we employ a recovery technique based on a post-processed gradient, following the approach in \cite{hessianrecovery}. We also establish stability and error estimates for the local nonlinear problem and both \textit{a priori} and \textit{a posteriori} error estimates for the $\mathbb{P}_1$ finite element approximation of the fourth-order problem and the recovered Hessian on two-dimensional convex polygonal domains. Numerical experiments confirm that the same order of convergence extends to the full solution. For smooth test cases, we observe an $H^2$ convergence rate of order $\mathcal{O}(h)$, improving upon the results reported in previous studies \cite{caboussat}. For nonsmooth problems, our method yields consistent convergence results in the $L^2$ norm. Finally, we incorporate residual-based \textit{a posteriori} estimators to drive an adaptive mesh refinement strategy. The error indicator used proves to be efficient, and the resulting mesh refinement, by optimizing node placement, produces numerical approximations with significantly reduced errors. The strategy remains effective even for nonsmooth problems, demonstrating the robustness of the method.

This article is organized as follows. In \Cref{sec:ls}, we describe the splitting algorithm for the least-squares formulation of the Monge-Ampère equation \cite{caboussat}. \Cref{sec:biharmonic,sec:hessian} present two main contributions: a direct approximation of the fourth-order subproblem and a Hessian recovery strategy with \textit{a priori} and \textit{a posteriori} error estimates, while \Cref{ssec:nonlinear} addresses the stability of the nonlinear subproblem. In \Cref{sec:errorind}, we show how to combine these estimates to derive error indicators for the Monge-Ampère equation. Finally, \Cref{sec:num} validates the theoretical results through numerical experiments, including adaptive mesh refinement tests.

\section{Least-squares formulation and splitting algorithm for Monge-Amp\`ere equation}\label{sec:ls}
Let $\Omega\subset\mathbb{R}^2$ be a bounded, convex domain and let $\partial\Omega$ denote its boundary. Assume that $f\in L^1(\Omega)$ is positive and that $g\in H^{3/2}(\partial \Omega)$. The elliptic Dirichlet Monge-Amp\`ere problem is given by  
\begin{equation}\label{eq:prob}
\begin{cases}
\text{\rm det}\:D^2u = f \quad &\text{in } \Omega, \\
u = g \quad &\text{on } \partial\Omega,\\
\end{cases}
\end{equation}
where the unknown function $u$ is convex and $D^2u$ denotes its Hessian, \textit{i.e.} $[D^2u]_{ij} =\frac{\partial^2 u}{\partial x_i \partial x_j}$. Among the various methods available for solving \eqref{eq:prob} in $H^2(\Omega)$, we advocate a nonlinear least-squares formulation that relies on the introduction of an additional auxiliary variable \cite{caboussat}. In order to do so, let us define $\mathbf{P} = D^2u$, with $\mathbf{P}\in L^2(\Omega,\mathbb{R}^{2\times 2})$, and rewrite \eqref{eq:prob} as 
\begin{equation}\label{eq:prob1}
\begin{cases}
\text{\rm det}\:\mathbf{P} = f \quad &\text{in } \Omega, \\
\mathbf{P} = D^2u\quad &\text{in } \Omega, \\
u = g \quad &\text{on } \partial\Omega.\\
\end{cases}
\end{equation}
Given that we look for the convex solution to \eqref{eq:prob}, we impose the additional constraint that $\mathbf{P}$ must be symmetric positive definite (henceforth, spd). If there exists a solution $u$ to \eqref{eq:prob} in $H^2(\Omega)$, then $(u,\mathbf{P})=(u,D^2u)$ is a solution to the reformulated problem \eqref{eq:prob1}. Moreover, $(u,\mathbf{P})$ is the minimizer of the following problem:
\begin{equation}
\label{eq:leastsq}
(u, \mathbf{P}) = \argmin_{\substack{v\in H^2(\Omega)\cap H^1_g(\Omega) \\ \mathbf{Q}\in  L^2(\Omega; \mathbb{R}^{2\times 2})}}\left\{ J(v, \mathbf{Q}),\quad\text{s.t. } \text{\rm det}\:\mathbf{Q} = f,\,\mathbf{Q}\text{ spd}\right\},
\end{equation} 
where the functional $J(\cdot,\cdot)$ is defined by
\begin{equation*}
J(v, \mathbf{Q}):=\dfrac{1}{2}\int_\Omega |D^2v - \mathbf{Q}|^2,
\end{equation*}
and $|\cdot|$ denotes the Frobenius norm and $H^1_g(\Omega):=\{v\in H^1(\Omega):\,v|_{\partial\Omega}=g\}$. Here, $J(v,\mathbf{Q})$ measures the $L^2$ distance between the Hessian of $v$ and the auxiliary variable $\mathbf{Q}$, while the nonlinearity is accounted for through the constraint $\text{\rm det}\:\mathbf{Q} = f$. If $u\in H^2(\Omega)$ is a solution to the reformulated problem \eqref{eq:prob1}, then $J(u,D^2u) = 0$ and $(u, D^2u)$ is a minimizer of the functional \eqref{eq:leastsq}. This approach, which reformulates a fully nonlinear PDE as a nonlinear least-squares problem, can also be applied to other first or second order PDEs \cite{prins,yadav,lsjac,lsorthogonal,dimitrios_adaptive}.
\begin{remark}
Notice that if there exists a unique convex solution $u\in H^2(\Omega)$ to \eqref{eq:prob}, then the minimizer of \eqref{eq:leastsq} must also be unique. Otherwise, if there were another minimizer $(u_1,\mathbf{P}_1)$ with $J(u_1,\mathbf{P}_1) = 0$, then $(u_1,\mathbf{P}_1)$ would also solve \eqref{eq:prob1}, contradicting the assumed uniqueness of the solution to \eqref{eq:prob} and \eqref{eq:prob1}. This level of regularity is a standard assumption in the numerical analysis of fully nonlinear PDEs, \textit{e.g.} \cite{brenner,brenner2,neilan,lakkis}. Conversely, if no solution $u\in H^2(\Omega)$ to \eqref{eq:prob} exists, existence and uniqueness of a minimizer for \eqref{eq:leastsq} remains an open question.
\end{remark}
In order to approximate the solution to \eqref{eq:leastsq}, we advocate for a splitting algorithm \cite{caboussat} that iteratively decomposes the minimization problem \eqref{eq:leastsq} into two subproblems. Specifically, given an initial function \( u^0 \in H^2(\Omega)\), for \( n \geq 0 \), we seek \( \mathbf{P}^n \) and \( u^{n+1} \) such that:
\begin{subequations}\label{eq:splitting}
\begin{align}
\label{eq:firstmin}\mathbf{P}^n =& \argmin_{\mathbf{Q}\in L^2(\Omega; \mathbb{R}^{2\times 2})}\left\{J(u^n,\mathbf{Q}),\quad\text{s.t. } \text{\rm det}\:\mathbf{Q} = f,\,\mathbf{Q}\text{ spd}\right\},\\
\label{eq:biharmonic} u^{n+1} = &\argmin_{v\in H^2(\Omega)\cap H^1_g(\Omega)}J(v,\mathbf{P}^n).
\end{align}
\end{subequations}
In this formulation, the nonlinearity of the constraint is isolated in the first subproblem \eqref{eq:firstmin}, while the second subproblem \eqref{eq:biharmonic} deals with the variational character of the problem. The first subproblem can be solved pointwise using a Lagrange multiplier argument, as detailed in \Cref{ssec:nonlinear}. Meanwhile, the second subproblem corresponds to a fourth-order differential problem; its numerical approximation is detailed in \Cref{sec:biharmonic}. Although a rigorous convergence proof for the sequence $(u^n,\mathbf{P}^n)$ converging to $(u,\mathbf{P})$ is not available yet, numerical results show that, with proper initialization, the iterative algorithm converges \cite{caboussat,dimitrios,dimitrios_adaptive}. 
\begin{remark}\label{rem:distance}
By definition of \eqref{eq:firstmin} and \eqref{eq:biharmonic}, we obtain: 
\begin{equation*}
0\leq J(u^{n+1} , \mathbf{P}^{n+1})\leq J(u^{n+1},\mathbf{P}^n)\leq J(u^n ,\mathbf{P}^n)\leq \dots \leq J(u^0 , \mathbf{P}^0),\quad \forall n\geq 0,
\end{equation*}
and thus $J(u^n,\mathbf{P}^n)$ converges when $n\to \infty$.
\end{remark}

\subsection*{Initialization of the splitting algorithm}\label{ssec:initialization}
For the initialization of the algorithm, we assume that the eigenvalues of $D^2u$, denoted by $\lambda_1$ and $\lambda_2$, are close ($\lambda_1 \approx \lambda_2$) \cite{caboussat}. In that case, 
$$(\Delta u)^2=(\lambda_1 + \lambda_2)^2 \approx 4(\lambda_1 \lambda_2)=4f.$$
Then, in order to initialize $u^0$ we solve the following Poisson problem:
\begin{equation}\label{eq:initialization}
\begin{cases}
\Delta u^0 = 2\sqrt{f}\quad &\text{in }\Omega,\\
u^0 = g\quad &\text{on }\partial\Omega.
\end{cases}
\end{equation}
\begin{remark}\label{rem:convex}
This initialization is commonly used in the literature, not only for nonlinear least-squares methods \cite{lakkis}. Since the solution must be convex, we would prefer starting from a convex \(u^0\). The positivity of the Laplacian alone does not guarantee convexity. However, if \(\Omega = B_1(0) \subset \mathbb{R}^2\), \(f = f(|x|)\) is positive and increasing in \(|x|\), and \(g = g(|x|)\) is radial, one can show that the solution \(u^0\) is radial, \textit{i.e.}, \(u^0(x) = u^0(r)\) with \(r = |x| \in [0,1]\), and \(u^0\) is convex.
\end{remark}

\section{Approximation of the fourth-order problem \eqref{eq:biharmonic}}\label{sec:biharmonic}
The second subproblem in the splitting algorithm \eqref{eq:splitting} is a fourth-order biharmonic type variational problem, equivalent to:
$$u^{n+1} = \argmin_{v\in H^2(\Omega)\cap H^1_g(\Omega)}\int_\Omega\left\{ \dfrac{1}{2}|D^2v|^2 -\mathbf{P}^n:D^2v\right\}.$$
This formulation seeks a function $u$ whose Hessian matrix is the closest, in the $L^2$ sense, to a given symmetric tensor field $\mathbf{P}^n$. The associated Euler-Lagrange equation reads: 
\begin{equation}\label{eq:secondminn}
\text{Find }u^{n+1}\in H^2(\Omega)\cap H^1_g(\Omega):\,\int_\Omega D^2 u^{n+1} : D^2 v = \int_\Omega \mathbf{P}^n:D^2 v,
\end{equation}
for any $v\in H^2(\Omega)\cap H^1_0(\Omega)$. Since the bilinear form $a(u,v) = \int_\Omega D^2u : D^2v$ defines an inner product on $H^2(\Omega)\cap H^1_0(\Omega)$, problem \eqref{eq:secondminn} is well posed. In previous works \cite{caboussat,dimitrios}, problem \eqref{eq:secondminn} was approximated using a conjugate gradient algorithm in Hilbert spaces, based on the inner product \( \langle u, v \rangle_{H^2(\Omega)\cap H^1_0(\Omega)} = \int_{\Omega} \Delta u \, \Delta v \). However, this approach introduces an additional layer of iteration to an already computationally intensive algorithm. Specifically, each iteration requires solving two Poisson problems, and numerical experiments reported in \cite{caboussat} indicate that approximately 10 iterations are needed to achieve a tolerance of \(10^{-5}\), with the number of iterations increasing as the mesh is refined. 

In this work, we propose solving \eqref{eq:secondminn} using a direct finite element solver. This eliminates the need for an inner iterative loop. Besides improving numerical accuracy, this strategy also reduces computational costs by approximately an order of magnitude.
To approximate \eqref{eq:secondminn} using $\mathbb{P}_1$ mixed finite elements (as detailed in \Cref{ssec:fe}), we aim to reformulate the problem in terms of a system of two second-order equations. Let $\nu$ and $\tau$ denote the unit normal and tangent vectors to the boundary $\partial \Omega$. Assuming sufficient regularity of $u$ and $v$, integration by parts twice gives:
\begin{equation*}
\begin{split}
    \int_\Omega (D^2 u^{n+1} - \mathbf{P}^n) : D^2 v = &\int_{\partial \Omega} (D^2u^{n+1}-  \mathbf{P}^{n} ): (\nu \otimes \nu)\frac{\partial v}{\partial \nu} \\
    &+ \int_\Omega (\Delta^2 u^{n+1} - \text{\rm div}(\text{\rm div}(\mathbf{P}^n))) v,\\
\end{split}
\end{equation*}
for any $v\in H^2(\Omega)\cap H^1_0(\Omega)$. From now on, let assume that $\Omega$ is a convex polygon. Then, using the identity
\[
\Delta u^{n+1} = D^2 u^{n+1} : (\nu \otimes \nu) + D^2 u^{n+1} : (\tau \otimes \tau) \quad \text{on }\partial\Omega ,
\]
and relating the tangential part to the boundary data $u^{n+1} = g$ via
\begin{equation}\label{eq:bc}
\frac{d^2 g}{ds^2} = D^2 u^{n+1} : (\tau \otimes \tau) \quad \text{on }\partial\Omega,
\end{equation}
where $s$ is the arc-length parameter along $\partial \Omega$, we obtain the strong formulation of \eqref{eq:secondminn}:
\begin{equation}\label{eq:bilapla}
\begin{cases}
\Delta^2 u^{n+1} = \text{\rm div}(\text{\rm div}(\mathbf{P}^n))\quad &\text{in }\Omega,\\
\Delta u^{n+1}=\phi^n\quad &\text{on }\partial\Omega,\\
u^{n+1}=g\quad &\text{on }\partial\Omega,\\
\end{cases}
\end{equation}
where $\phi^n :=  \mathbf{P}^{n} : (\nu \otimes \nu) + \frac{d^2 g}{ds^2}$. By introducing the auxiliary variable $\omega^{n+1}= -\Delta u^{n+1}$, we can reformulate \eqref{eq:bilapla} as two decoupled Poisson problems. Their weak formulation is as follows: find $(\omega^{n+1}, u^{n+1}) \in H^1_{\phi^n}(\Omega) \times H^1_g(\Omega)$ such that 
\begin{equation}\label{eq:splitbihdiv}
\begin{cases}
\displaystyle\int_\Omega \nabla \omega^{n+1} \cdot \nabla \psi = -\int_\Omega \text{\rm div}(\mathbf{P}^n) \cdot \nabla \psi,  &\forall \psi \in H^1_0(\Omega),\\[2mm]
\displaystyle\int_\Omega \nabla u^{n+1} \cdot \nabla v = \int_\Omega \omega^{n+1} v,  &\forall v \in H^1_0(\Omega).
\end{cases}
\end{equation}

\subsection{$\mathbb{P}_1$ FE approximation of $(\omega^{n+1},u^{n+1})$}\label{ssec:fe}
We have split the original fourth‐order problem \eqref{eq:splitbihdiv} into two uncoupled Poisson equations for \(\omega^{n+1}\) and \(u^{n+1}\), each subject to (possibly non‐homogeneous) Dirichlet boundary conditions. This allows us to employ the same \(\mathbb P_1\) finite‐element space for both. For any $h>0$, let \(\mathcal T_h\) be a conforming, triangulation of \(\overline\Omega\) into triangles \(K\) of diameter \(h_K\leq h\) and assume that the mesh $\mathcal{T}_h$ is regular \cite{ciarlet}, \textit{i.e.} there exists $\vartheta>0$ such that for any $K\in\mathcal{T}_h$
$$\frac{h_K}{\rho _K}\leq \vartheta,$$
where $\rho_K$ is the diameter of the largest ball inscribed in $K$. Define  
\[
V_h(\Omega) := \{ v_h \in C^0(\Bar{\Omega}) : v_h |_{K} \in \mathbb{P}_1, \, \forall K \in \mathcal{T}_h \} \subset H^1(\Omega).
\]  
and 
$$ V_{h,\alpha}(\Omega):=\{v\in V_h(\Omega): v|_{\partial\Omega}=\alpha_h\},$$
where $\alpha_h$ is an approximations of $\alpha$, \textit{e.g.} the Lagrange interpolant if $\alpha\in H^{1/2}(\Omega)$. Now let $\mathbf{P}^n_h, g_h, \phi_h^n\in V_h(\Omega)$ be some approximations of $\mathbf{P}^n, g, \phi^n$, respectively, defined on the mesh $\mathcal{T}_h$. Details are given in \Cref{ssec:nonlinear}. We then seek \( (\omega_h^{n+1}, u_h^{n+1}) \in V_{h,\phi^n}(\Omega)\times V_{h,g}(\Omega) \) such that  
\begin{equation}\label{eq:fe}
\begin{cases}
\displaystyle \int_\Omega \nabla \omega_h^{n+1} \cdot \nabla \psi_h = -\int_\Omega \text{\rm div}(\mathbf{P}^n_h) \cdot \nabla \psi_h, \quad &\forall \psi_h \in V_{h,0}(\Omega),\\[2mm]
\displaystyle \int_\Omega \nabla u_h^{n+1} \cdot \nabla v_h = \int_\Omega \omega_h^{n+1} v_h,  &\forall v_h \in V_{h,0}(\Omega) .
\end{cases}
\end{equation}
We define the discretization errors as
$\epsilon_h^{n+1} := \omega^{n+1} - \omega_h^{n+1}$ and $e_h^{n+1} := u^{n+1} - u_h^{n+1}$. Then, we derive \textit{a priori} estimates and residual-based \textit{a posteriori} bounds in in \Cref{lemma:apriori} and \Cref{lemma:aposteriori}, respectively.
\begin{theorem}\label{lemma:apriori}
Let $\Omega\subset\mathbb{R}^2$ be a convex polygon. Let $\mathbf{P}^n\in H^2(\Omega,\mathbb{R}^{2\times 2})$, and $g,\frac{d^2g}{ds^2} \in H^{3/2}(\partial\Omega)$. Then, the following estimates hold: 
\begin{subequations}
\begin{align}
\label{eq:apriorinablaepsilon}
\| \nabla \epsilon_h^{n+1}\|_{L^2(\Omega)}\lesssim& h\big(\|\mathbf{P}^n\|_{H^2(\Omega)}+\|\phi^n\|_{H^{\frac{3}{2}}(\partial\Omega)}\big) \\ \notag
& +\|\mathbf{P}^n-\mathbf{P}^n_h\|_{H^1(\Omega)} +\|\phi^n-\phi^n_h\|_{H^{\frac{1}{2}}(\partial\Omega)},\\
\label{eq:aprioriepsilon}
\|\epsilon_h^{n+1}\|_{L^2(\Omega)}\lesssim&h\| \nabla \epsilon_h^{n+1}\|_{L^2(\Omega)} + \|\mathbf{P}^n-\mathbf{P}^n_h\|_{L^2(\Omega)} +\|\phi^n-\phi^n_h\|_{H^{-\frac{1}{2}}(\partial\Omega)},\\
\label{eq:apriorinablae}
\| \nabla e_h^{n+1}\|_{L^2(\Omega)}\lesssim& h\|\omega^{n+1}\|_{L^2(\Omega)}+h\|g\|_{H^{\frac{3}{2}}(\partial\Omega)}\\ \notag
& + \|\epsilon^{n+1}_h\|_{H^{-1}(\Omega)} +\|g-g_h\|_{H^{\frac{1}{2}}(\partial\Omega)} ,\\
\label{eq:apriorie}
\|e_h^{n+1}\|_{L^2(\Omega)}\lesssim& h\| \nabla e_h^{n+1}\|_{L^2(\Omega)} + \|\epsilon^{n+1}_h\|_{H^{-1}(\Omega)} +\|g-g_h\|_{H^{-\frac{1}{2}}(\partial\Omega)} .
\end{align}
\end{subequations}
\end{theorem}

\begin{proof}
These estimates follow from standard regularity results for the Poisson equation and interpolation estimates \cite{brenner_fem}, applied successively to \(\omega^{n+1}\) and then to \(u^{n+1}\).
\end{proof}
Except for the terms measuring data‐mismatch ($\mathbf{P}^n-\mathbf{P}^n_h$, $\phi^n-\phi^n_h$ and $g-g_h$), these estimates imply that if $\mathbf{P}^n\in H^2(\Omega)$, then 
\[
\|\nabla\epsilon_h^{n+1}\|_{L^2(\Omega)}=\mathcal{O}(h),
\quad
\|\epsilon_h^{n+1}\|_{L^2(\Omega)}=\mathcal{O}(h^2).
\]
Similarly, for $u_h$, from \eqref{eq:apriorinablae}-\eqref{eq:apriorie} we obtain
\[
\|\nabla e_h^{n+1}\|_{L^2(\Omega)} = \mathcal{O}(h),\quad \|e_h^{n+1}\|_{L^2(\Omega)}=\mathcal{O}(h^2),
\]
given that $\|\epsilon^{n+1}_h\|_{H^{-1}(\Omega)}\leq \|\epsilon^{n+1}_h\|_{L^{2}(\Omega)}$. 
\begin{theorem}\label{lemma:aposteriori}
Let $\Omega\subset\mathbb{R}^2$ be a convex polygon. Let $\mathbf{P}^n\in H^2(\Omega,\mathbb{R}^{2\times 2})$, and $g,\frac{d^2g}{ds^2} \in H^{3/2}(\partial\Omega)$. Then the following estimates hold: 
\begin{subequations}
\begin{align}
\label{eq:h1erromega}
\| \nabla \epsilon_h^{n+1}\|_{L^2(\Omega)}\lesssim &\big( \sum_{K\in\mathcal{T}_h} \eta_K^2\big)^{1/2}+\|\mathbf{P}^n-\mathbf{P}^n_h\|_{H^1(\Omega)}+\|\phi^n - \phi_h^n\|_{H^{1/2}(\partial\Omega)},\\
\label{eq:l2erromega}\|\epsilon_h^{n+1}\|_{L^2(\Omega)}\lesssim& \big(\sum_{K\in\mathcal{T}_h} h_K^2\eta_K^2\big)^{1/2} + \|\mathbf{P}^n-\mathbf{P}^n_h\|_{L^2(\Omega)}+\|\phi^n-\phi^n_h\|_{H^{-1/2}(\partial \Omega)},\\
\label{eq:h1erru}
\| \nabla e_h^{n+1}\|_{L^2(\Omega)}\lesssim &\big(\sum_{K\in\mathcal{T}_h} \hat{\eta}_K^2\big)^{1/2} + \|\epsilon_h^{n+1}\|_{H^{-1}(\Omega)} +\|g - g_h\|_{H^{1/2}(\partial\Omega)},\\
\label{eq:l2erru}\|e_h^{n+1}\|_{L^2(\Omega)}\lesssim &\big(\sum_{K\in\mathcal{T}_h} h_K^2\hat{\eta}_K^2\big)^{1/2} + \|\epsilon_h^{n+1}\|_{H^{-1}(\Omega)}+ \|g - g_h\|_{H^{-1/2}(\partial\Omega)},
\end{align}
\end{subequations}
with
\begin{equation}\label{eq:etaK}
\eta_K = h_K\|\text{\rm div}(\text{\rm div}(\mathbf{P}^n_h)+\nabla  \omega_h^{n+1})\|_{L^2(K)} + h^{\frac{1}{2}}_K\|[(\text{\rm div}(\mathbf{P}^n_h)+\nabla  \omega_h^{n+1})\cdot n_K]\|_{L^2(\partial K)},
\end{equation}
and
\begin{equation}\label{eq:etahatK}
\hat{\eta}_K = h_K\|\omega_h^{n+1}+\Delta u_h^{n+1}\|_{L^2(K)} + h^{\frac{1}{2}}_K\|[\nabla  u_h^{n+1}\cdot n_K]\|_{L^2(\partial K)}.
\end{equation}
\end{theorem}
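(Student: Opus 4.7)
The plan is to apply the classical residual-based a posteriori analysis for Poisson problems to each of the two decoupled equations in \eqref{eq:splitbihdiv}, combined with the Galerkin–Dirichlet error splitting $\epsilon_h^{n+1}=\epsilon_h^{n+1,G}+\epsilon_h^{n+1,D}$ introduced in \eqref{eq:gerror}–\eqref{eq:derror} (and the analogous one for $e_h^{n+1}$). By inspection, $\eta_K$ and $\hat\eta_K$ are exactly the standard interior-plus-jump residual indicators for the Poisson problems with right-hand sides $-\text{\rm div}(\mathbf{P}^n_h)$ and $\omega_h^{n+1}$ respectively, and convexity of $\Omega$ supplies the $H^2$-regularity of the auxiliary dual problems needed to drive the Aubin–Nitsche argument that produces the $h$-weighted bounds \eqref{eq:l2erromega} and \eqref{eq:l2erru}.

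For \eqref{eq:h1erromega} I would start from the identity, valid for every $v\in H^1_0(\Omega)$ and obtained by subtracting the discrete equation tested against a Cl\'ement/Scott–Zhang quasi-interpolant $I_h v\in V_{h,0}(\Omega)$ from the continuous one,
\begin{equation*}
\int_\Omega \nabla\epsilon_h^{n+1,G}\cdot\nabla v
= -\int_\Omega\bigl(\nabla\omega_h^{n+1}+\text{\rm div}(\mathbf{P}^n_h)\bigr)\cdot\nabla(v-I_h v)
+\int_\Omega \text{\rm div}(\mathbf{P}^n_h-\mathbf{P}^n)\cdot\nabla v.
\end{equation*}
Elementwise integration by parts of the first term produces precisely the interior divergence and jump contributions that define \eqref{eq:etaK}, and coupling this with the Cl\'ement estimates $\|v-I_hv\|_{L^2(K)}\le Ch_K\|\nabla v\|_{L^2(\tilde K)}$ and $\|v-I_hv\|_{L^2(E)}\le Ch_E^{1/2}\|\nabla v\|_{L^2(\tilde E)}$ yields a bound by $C(\sum_K\eta_K^2)^{1/2}\|\nabla v\|_{L^2(\Omega)}$. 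Taking $v=\epsilon_h^{n+1,G}$, adding the harmonic-lifting bound $\|\nabla\epsilon_h^{n+1,D}\|_{L^2}\le C\|\phi^n-\phi_h^n\|_{H^{1/2}(\partial\Omega)}$, and using the $H^1$-orthogonality of the two components gives \eqref{eq:h1erromega}.

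For \eqref{eq:l2erromega} I would introduce the Aubin–Nitsche dual problem $-\Delta z=\epsilon_h^{n+1}$ with $z=0$ on $\partial\Omega$, whose solution satisfies $\|z\|_{H^2}\le C\|\epsilon_h^{n+1}\|_{L^2}$. Integration by parts yields
\begin{equation*}
\|\epsilon_h^{n+1}\|_{L^2(\Omega)}^2
=\int_\Omega\nabla\epsilon_h^{n+1,G}\cdot\nabla z
-\int_{\partial\Omega}(\phi^n-\phi_h^n)\,\frac{\partial z}{\partial\nu},
\end{equation*}
where the contribution of $\epsilon_h^{n+1,D}$ drops out because $z\in H^1_0$. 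The volume term is controlled via the previous residual identity with $v=z$: the improved Cl\'ement estimate $\|z-I_hz\|_{L^2(K)}\le Ch_K^2\|D^2z\|_{L^2(\tilde K)}$ produces the weighted sum $(\sum_K h_K^2\eta_K^2)^{1/2}$, while a further integration by parts of $\int\text{\rm div}(\mathbf{P}^n_h-\mathbf{P}^n)\cdot\nabla z$ yields $\|\mathbf{P}^n-\mathbf{P}^n_h\|_{L^2}\|z\|_{H^2}$. The boundary integral is handled by the $H^{-1/2}\times H^{1/2}$ duality on $\partial\Omega$ together with the trace bound $\|\partial_\nu z\|_{H^{1/2}(\partial\Omega)}\le C\|z\|_{H^2(\Omega)}$. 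Dividing by $\|\epsilon_h^{n+1}\|_{L^2}$ closes \eqref{eq:l2erromega}.

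The estimates \eqref{eq:h1erru}–\eqref{eq:l2erru} for $e_h^{n+1}$ follow from the same template applied to the second Poisson problem, giving rise to the indicator $\hat\eta_K$ of \eqref{eq:etahatK}. The new feature is the data mismatch $\omega^{n+1}-\omega_h^{n+1}=\epsilon_h^{n+1}$ between the continuous and discrete right-hand sides, which contributes a term $\int_\Omega\epsilon_h^{n+1}v$; this is controlled by $\|\epsilon_h^{n+1}\|_{H^{-1}(\Omega)}\|\nabla v\|_{L^2}$ in the $H^1$ estimate, and by $\|\epsilon_h^{n+1}\|_{H^{-1}(\Omega)}\|z\|_{H^1_0}\le C\|\epsilon_h^{n+1}\|_{H^{-1}(\Omega)}\|e_h^{n+1}\|_{L^2}$ in the $L^2$ estimate, via the corresponding dual problem $-\Delta z=e_h^{n+1}$. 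The hard part throughout is bookkeeping: keeping the Galerkin residual, the interior data mismatch, the Dirichlet-lifting mismatch and the dual boundary contribution cleanly separated, and in particular exploiting the Galerkin–Dirichlet orthogonality in the duality step so that the boundary-data mismatch ends up measured in the weaker $H^{-1/2}$ norm in \eqref{eq:l2erromega} and \eqref{eq:l2erru} rather than in $H^{1/2}$.
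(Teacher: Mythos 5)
Your proposal follows essentially the same route as the paper: the Galerkin/Dirichlet-lifting splitting with $H^1$-orthogonality, elementwise integration by parts against a Cl\'ement-type quasi-interpolant to produce $\eta_K$ and $\hat\eta_K$, the Aubin--Nitsche duality with convexity-based $H^2$ regularity and $h_K^2$-weighted interpolation for the $L^2$ bounds, and the $H^{1/2}$--$H^{-1/2}$ duality on $\partial\Omega$ for the boundary-data and lifting terms, with the mismatch $\omega^{n+1}-\omega_h^{n+1}$ handled through $\|\epsilon_h^{n+1}\|_{H^{-1}(\Omega)}$ in the second pair of estimates exactly as the paper intends. The argument is correct and no substantive deviation from the paper's proof is present.
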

\begin{proof}
These estimates are obtained by applying twice (first to \(\omega^{n+1}\), then to \(u^{n+1}\)) the standard \emph{a posteriori} error estimate for the Poisson problem \cite{brenner_fem}.
\end{proof}

These bounds yield error indicators that depend only on computable residuals and not on the exact solution \( u \). In particular, if the data mismatches, namely, \( \mathbf{P}^n - \mathbf{P}^n_h \), \( \phi^n - \phi^n_h \), and \( g - g_h \), are of higher order, then the estimators given in \eqref{eq:l2erromega} and \eqref{eq:h1erru} serve as reliable indicators for the corresponding errors. These estimators can be used to locally refine the mesh \cite{ainsworth}, see \Cref{ssec:adapt}.

\begin{remark} 
The biharmonic problem \eqref{eq:secondminn} is closely related to the bending of a hinged (simply supported) plate \cite{sweers}, where the vertical deflection \(u\) minimizes
\begin{equation}\label{eq:bending}
u := \argmin_{v\in H^2(\Omega)\cap H^1_g(\Omega)} \int_\Omega \Big\{\frac{1}{2}(\Delta v)^2 - (1-\sigma)\det(D^2 v) - f v \Big\},
\end{equation}
where \(\sigma\) is the Poisson ratio and \(f\) is the applied load. When \(\sigma = 0\), the energy density reduces to \(\frac{1}{2}|D^2 v|^2\), so \eqref{eq:secondminn} is a special case of \eqref{eq:bending}. Although typical materials have \(0 < \sigma < 0.5\), the \(\sigma = 0\) case arises in certain idealized settings. The estimates in \Cref{lemma:apriori,lemma:aposteriori} also apply to \eqref{eq:bending} by replacing \(\mathrm{div}(\mathrm{div}(\mathbf{P}^n))\) with \(f\).
\end{remark}

\begin{remark}
A well-known issue, the \emph{Babuška paradox}, arises when modelling curved domains with polygonal approximations for \eqref{eq:secondminn} and \eqref{eq:bending}. For curvilinear boundaries, \eqref{eq:bc} reads \(\frac{d^2 g}{ds^2} = D^2 u^{n+1} : (\tau \otimes \tau) - \kappa \frac{\partial u^{n+1}}{\partial \nu} \), with \(\kappa\) the signed curvature. Replacing a smooth boundary by inscribed polygons causes the solutions to fail to converge to the true solution \cite{sweers}. Consequently, standard conforming finite elements cannot be applied directly, and penalty formulations are typically introduced to enforce boundary conditions weakly; see \cite{bartels} for a recent analysis.
\end{remark}

\section{Hessian recovery}\label{sec:hessian}
In the previous section we have approximated both $u^{n+1}$ and $\omega^{n+1}=-\Delta u^{n+1}$ by piecewise linear finite elements. However, to solve the nonlinear subproblem \eqref{eq:firstmin} pointwise on each mesh vertex, we must also approximate the full Hessian $D^2u^{n+1}$ on each mesh vertex. In \cite{caboussat,dimitrios,dimitrios_adaptive}, the Hessian is approximated in a weak sense using piecewise linear finite elements, with homogeneous Dirichlet boundary conditions imposed on all components of the matrix field. This approach introduces significant approximation errors near the boundary due to the boundary conditions, and no convergence is observed for the error in the \( H^2 \) norm. To address these limitations, we adopt a two-step projection strategy inspired by standard gradient recovery techniques, as proposed in \cite{hessianrecovery}. This method provides a more accurate reconstruction of the Hessian, particularly near the boundary, and enables improved convergence properties.

First, we compute a post‑processed gradient $G_hu_h^{n+1}\in V_h(\Omega)$, \textit{i.e.} a recovered gradient that achieves higher accuracy. Specifically, we employ the polynomial-preserving recovery (PPR) gradient technique introduced in \cite{nagazhang}, although alternative recovery strategies could also be considered \cite{ainsworth}. We construct for each vertex $z\in\mathcal{T}_h$ a local patch $\omega_z$ of surrounding elements and fit a quadratic polynomial $p_z\in \mathbb{P}_2(\omega_z)$ in a discrete least‑squares sense to the finite‑element solution values on the vertices of $\omega_z$. The recovered gradient is then defined by $(G_hu_h^{n+1})(z) := \nabla p_z(z)$, which is locally linear.  This procedure preserves all polynomials up to degree $2$ exactly. For further details, one can refer to the original work \cite{nagazhang}. 

Next, we define the recovered Hessian $D^2_hu_h^{n+1}$ by projecting the symmetrized gradient of $G_hu_h^{n+1}$ back onto the finite element space. That is, we seek $(D^2_hu_h^{n+1})_{ij}\in V_h(\Omega)$ such that 
\begin{equation}\label{eq:hessian}
\int_\Omega (D^2_hu_h^{n+1})_{ij} v_h = \dfrac{1}{2}\int_\Omega \frac{\partial (G_hu_h^{n+1})_i}{\partial x_j}v_h + \dfrac{1}{2}\int_\Omega \frac{\partial (G_hu_h^{n+1})_j}{\partial x_i}v_h,
\end{equation} 
for any $v_h\in V_h(\Omega)$ and $1\leq i,j\leq 2$. By construction, $D^2_hu_h^{n+1}$ is symmetric. 

In order to have an \textit{a priori} estimate on $\|D^2u^{n+1} - D^2_hu_h^{n+1}\|_{L^2(\Omega)}$, we 
start by defining $\Tilde{u}^{n+1}\in H^1_g(\Omega)$ as the solution to 
\begin{equation}\label{eq:untilde}
\int_\Omega \nabla \Tilde{u}^{n+1} \cdot \nabla v = \int_\Omega \omega^{n+1}_h v\quad \forall v\in H^1_0(\Omega).
\end{equation}
The following result holds.
\begin{theorem}\label{lemma:hessianrec} 
Let assume that $G_h u_h^{n+1}$ superconverges to $\nabla \Tilde{u}^{n+1}$, \textit{i.e.} there exists $C>0$ and $0<\alpha\leq 1$ independent of $h$ such that 
\begin{equation}\label{eq:hpthe}
\frac{1}{h}\|\nabla \Tilde{u}^{n+1} -G_hu_h^{n+1}\|_{L^2(\Omega)}+ \frac{1}{h^{1/2}}\|\nabla \Tilde{u}^{n+1} -G_hu_h^{n+1}\|_{L^2(\partial\Omega)}\leq Ch^\alpha,
\end{equation}
then the following estimate holds: 
\begin{equation}\label{eq:boundd2u}
\|D^2 u^{n+1} - D^2_hu_h^{n+1}\|_{L^2(\Omega)}\leq C_1h^{\alpha} + C_{2}\|\epsilon_h^{n+1}\|_{L^2(\Omega)} + \mathcal{O}(h).
\end{equation}
\end{theorem}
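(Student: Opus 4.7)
The plan is to insert the auxiliary function $\Tilde u^{n+1}$ of \eqref{eq:untilde} and split
$$D^2 u^{n+1}-D^2_h u_h^{n+1}=\bigl(D^2 u^{n+1}-D^2\Tilde u^{n+1}\bigr)+\bigl(D^2\Tilde u^{n+1}-D^2_h u_h^{n+1}\bigr).$$
Since $u^{n+1}$ and $\Tilde u^{n+1}$ both equal $g$ on $\partial\Omega$ while $-\Delta u^{n+1}=\omega^{n+1}$ and $-\Delta\Tilde u^{n+1}=\omega_h^{n+1}$, the difference $w:=u^{n+1}-\Tilde u^{n+1}$ solves $-\Delta w=\epsilon_h^{n+1}$ with homogeneous Dirichlet data, so convexity of $\Omega$ together with standard $H^2$-regularity gives $\|D^2 w\|_{L^2(\Omega)}\leq C\|\epsilon_h^{n+1}\|_{L^2(\Omega)}$. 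This delivers the $C_2\|\epsilon_h^{n+1}\|_{L^2(\Omega)}$ term of \eqref{eq:boundd2u}.

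For the second contribution, I would introduce the componentwise $L^2$-projection $\Pi_h$ onto $V_h(\Omega)$ and decompose
$$D^2\Tilde u^{n+1}-D^2_h u_h^{n+1}=(I-\Pi_h)D^2\Tilde u^{n+1}+\bigl(\Pi_h D^2\Tilde u^{n+1}-D^2_h u_h^{n+1}\bigr).$$
The first summand is a standard $L^2$-projection error; since $\omega_h^{n+1}\in H^1(\Omega)$ and $g$ is smooth, elliptic regularity on the convex polygon provides enough extra regularity of $\Tilde u^{n+1}$ for a Bramble--Hilbert argument to bound it by $\mathcal O(h)$. The consistency-type summand lies entirely in $V_h(\Omega)^{2\times 2}$, so I test it componentwise against arbitrary $v_h\in V_h(\Omega)$ and combine the defining identity \eqref{eq:hessian} for $D^2_h u_h^{n+1}$, the defining property of $\Pi_h$, and the symmetric rewriting $\partial_{ij}\Tilde u^{n+1}=\tfrac12\bigl(\partial_j(\nabla\Tilde u^{n+1})_i+\partial_i(\nabla\Tilde u^{n+1})_j\bigr)$; the difference between the two integrals then reduces to a symmetric sum of terms of the form $\int_\Omega\partial_j(\nabla\Tilde u^{n+1}-G_h u_h^{n+1})_i v_h$. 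One integration by parts in each term yields a volume contribution pairing $\nabla\Tilde u^{n+1}-G_h u_h^{n+1}$ with $\nabla v_h$ plus a boundary contribution pairing its trace with $v_h|_{\partial\Omega}$. Choosing $v_h$ equal to the consistency error itself and invoking the inverse estimate $\|\nabla v_h\|_{L^2(\Omega)}\leq Ch^{-1}\|v_h\|_{L^2(\Omega)}$ together with the discrete trace inequality $\|v_h\|_{L^2(\partial\Omega)}\leq Ch^{-1/2}\|v_h\|_{L^2(\Omega)}$ absorbs one factor of $\|v_h\|_{L^2(\Omega)}$ from each contribution and leaves precisely
$$\|\Pi_h D^2\Tilde u^{n+1}-D^2_h u_h^{n+1}\|_{L^2(\Omega)}\leq C\bigl(h^{-1}\|\nabla\Tilde u^{n+1}-G_h u_h^{n+1}\|_{L^2(\Omega)}+h^{-1/2}\|\nabla\Tilde u^{n+1}-G_h u_h^{n+1}\|_{L^2(\partial\Omega)}\bigr)\leq Ch^\alpha,$$
by the superconvergence hypothesis \eqref{eq:hpthe}, providing the $C_1 h^\alpha$ term. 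Assembling the three contributions yields \eqref{eq:boundd2u}.

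The main obstacle I foresee is the boundary contribution generated by the integration by parts: the discrete trace inequality introduces a factor $h^{-1/2}$ that can only be compensated by genuine superconvergence of $G_h u_h^{n+1}$ in $L^2(\partial\Omega)$, which is precisely why the second norm in \eqref{eq:hpthe} is weighted by $h^{-1/2}$ rather than by $h^{-1}$; without that boundary term the argument collapses. A secondary subtlety is justifying enough extra regularity of $\Tilde u^{n+1}$ to obtain $\|(I-\Pi_h)D^2\Tilde u^{n+1}\|_{L^2(\Omega)}=\mathcal O(h)$, which relies on the convexity of the polygon, the smoothness of the boundary datum $g$, and the fact that $\omega_h^{n+1}$ is continuous piecewise linear and thus lies in $H^1(\Omega)$.
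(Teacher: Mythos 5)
Your proposal is correct and follows essentially the same route as the paper: split the error through $\Tilde{u}^{n+1}$, use $H^2$ elliptic regularity on the convex domain to produce the $C_2\|\epsilon_h^{n+1}\|_{L^2(\Omega)}$ term, and then control $D^2\Tilde{u}^{n+1}-D^2_hu_h^{n+1}$ by combining the defining relation \eqref{eq:hessian} with an integration by parts, the inverse estimate with factor $h^{-1}$, the discrete trace inequality with factor $h^{-1/2}$, and the superconvergence hypothesis \eqref{eq:hpthe}. The only cosmetic differences are that you compare with the componentwise $L^2$ projection and test the discrete remainder against itself, while the paper compares with the Cl\'ement interpolant of $D^2\Tilde{u}^{n+1}$ and concludes via triangle and Young inequalities; both arguments implicitly require the same $H^3$-type regularity of $\Tilde{u}^{n+1}$ for the $\mathcal{O}(h)$ term.
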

\begin{proof}
We observe that 
$$\|D^2 u^{n+1} - D^2_hu_h^{n+1}\|_{L^2(\Omega)}^2\leq \|D^2 u^{n+1} - D^2 \Tilde{u}^{n+1}\|_{L^2(\Omega)}^2+\|D^2 \Tilde{u}^{n+1} - D^2_hu_h^{n+1}\|_{L^2(\Omega)}^2$$
The first term can be estimated by standard regularity results for the Poisson equation, indeed: 
$$\|D^2 u^{n+1} - D^2 \Tilde{u}^{n+1}\|_{L^2(\Omega)}\leq C_\Omega\|\omega^{n+1} - \omega^{n+1}_h\|_{L^2(\Omega)}.$$
As for the second term, $\|D^2 \Tilde{u}^{n+1} - D^2_hu_h^{n+1}\|_{L^2(\Omega)}^2$, the proof follows \cite{hessianrecovery}. 
\end{proof}

Thanks to \Cref{lemma:apriori}, we know that \( \|\epsilon_h^{n+1}\|_{L^2(\Omega)} = \mathcal{O}(h^2) \), which leads to the estimate $\|D^2 u^{n+1} - D^2_h u_h^{n+1}\|_{L^2(\Omega)} = \mathcal{O}(h^{\alpha})$.
\begin{remark} The superconvergence assumption \eqref{eq:hpthe} has been shown to hold for the PPR technique on mildly structured meshes \cite{nagazhang}. Furthermore, the numerical study \cite{hessianrecovery} demonstrates that the expected convergence rate \eqref{eq:boundd2u} is also achieved on unstructured frontal meshes, which will be employed in the numerical experiments.\end{remark}
\begin{remark}
Since we seek a convex solution \( u \) to the Monge-Ampère problem, it is crucial to ensure that the recovered Hessian remains symmetric positive definite. Numerical experiments indicate that this property is naturally preserved on non adapted unstructured meshes (see \Cref{fig:mesh} in \Cref{sec:num}). However, issues may arise on adaptively refined unstructured meshes, where the irregularity in local vertex distributions can lead to non-convexity of the locally reconstructed quadratic polynomial. To address this, we incorporate a regularization term \cite{caboussat} and seek $(D^2_hu_h^{n+1})_{ij}\in V_h(\Omega)$ such that 
\begin{equation*}
\begin{split}
\int_\Omega (D^2_hu_h^{n+1})_{ij} v_h + \sum_{K\in\mathcal{T}_h}|K|\int_K \nabla (D^2_hu_h^{n+1})_{ij}\cdot \nabla v_h = &\dfrac{1}{2}\int_\Omega \frac{\partial (G_hu_h^{n+1})_i}{\partial x_j}v_h\\
& + \dfrac{1}{2}\int_\Omega \frac{\partial (G_hu_h^{n+1})_j}{\partial x_i}v_h,
\end{split}
\end{equation*} 
for any $v_h\in V_h(\Omega)$, $1\leq i,j\leq 2$.
\end{remark}

\section{Solution to the nonlinear problem \eqref{eq:firstmin}}\label{ssec:nonlinear} Problem \eqref{eq:firstmin} is a nonlinear minimization problem that can be written as
\begin{equation}\label{eq:firstmin_extended}
\mathbf{P}^n = \argmin_{\mathbf{Q}\in L^2(\Omega; \mathbb{R}^{2\times 2})}\left\{\int_\Omega \dfrac{1}{2}|\mathbf{Q}|^2- D^2u^n:\mathbf{Q},\quad\text{s.t. } \text{\rm det}\:\mathbf{Q} = f,\,\mathbf{Q}\text{ spd}\right\},\\
\end{equation}
The minimization problem can be solved pointwise. Indeed, for almost any $x\in \Omega$, $\mathbf{P}^n(x)$ is the projection of $D^2u^n(x)$ onto the subset of symmetric positive definite matrices with determinant equal to $f(x)$. Moreover, the solution is unique. There are several numerical techniques available to tackle this problem. For example, one may parametrize the matrix $\mathbf{Q}$, apply a Lagrange multiplier approach to enforce the constraints, and then use Netwon's method for the resulting unconstrained minimization problem \cite{caboussat}. In the two-dimensional case, one efficient approach is the $Qmin$ algorithm, introduced in \cite{glowinski}. We briefly describe this method below and refer the reader to the original work for more details. 

We assume that there exists $c_0>0$ such that $f(x)\geq c_0$ for almost every $x\in\Omega$. We define the normalized quantities $\overline{D^2u^n}:=D^2u^n/\sqrt{f}$ and $\overline{\mathbf{P}^n}:= \mathbf{P}^n/\sqrt{f}$. Then, \eqref{eq:firstmin} becomes equivalent to the pointwise minimization problem  
\begin{equation}\label{eq:min}
\overline{\mathbf{P}^n}(x) = \argmin_{\mathbf{Q}\in\mathbb{R}^{2\times 2}}\left\{\dfrac{1}{2}|\mathbf{Q}|^2- \overline{D^2u^n}(x):\mathbf{Q},\quad\text{s.t. } \text{\rm det}\:\mathbf{Q} = 1,\,\mathbf{Q}\text{ spd}\right\},\\
\end{equation}
with $x\in\Omega$. It can be shown \cite{glowinski} that $\overline{\mathbf{P}^n}(x)$ is a solution to \eqref{eq:min} if and only if it has the spectral decomposition 
$$\overline{\mathbf{P}^n}(x) =\mathbf{S}(x)\text{\rm diag}(p_1(x),p_2(x))\mathbf{S}^T(x),$$
where $\mathbf{S}(x)$ is an orthogonal matrix of eigenvectors matrix of $\overline{D^2u^n}(x)$ and $p(x)=(p_1(x),p_2(x))$ minimizes the reduced problem: 
\begin{equation}\label{eq:qmin}
p(x) = \argmin_{q\in \mathbb{R}^{2}}\left\{q^T q - 2b^T(x) q,\quad\text{s.t.}\,\, q_1q_2 = 1\right\},
\end{equation}
with $b(x):=\text{\rm diag}(\mathbf{S}(x)^T\overline{D^2u^n}(x)\mathbf{S}(x))$, \textit{i.e.} $b(x)=(b_1(x),b_2(x))$ are the eigenvalues of $\overline{D^2u^n}(x)$. Once the reduced problem is formulated, one can apply a Lagrange multiplier argument to incorporate the quadratic constraint and then solve the resulting problem via Newton's algorithm.

\subsection*{Stability and error estimates}
In practice, \eqref{eq:qmin} is solved for each vertex of $\mathcal{T}_h$. The estimates in \Cref{lemma:apriori,lemma:aposteriori} show that the errors are determined by the norm of the projection gap $\mathbf{P}^n - \mathbf{P}^n_h$. To characterize the decay of this gap under mesh refinement, we employ a two-stage argument. First, we establish a stability bound for the minimization problem \eqref{eq:min} in the appropriate norm. Second, we invoke classical interpolation estimates to translate this stability into the optimal order of convergence with respect to $h$. The following result holds. 
\begin{theorem}\label{lemma:stability}
Let $\Omega$ be a bounded convex domain, and let $\mathbf{P}$, $\mathbf{P}^n$ be the solution to \eqref{eq:firstmin_extended} with data $D^2u$ and $D^2u^n$, respectively. If $|f|\leq K$ a.e., $K>0$, and if $D^2u,D^2u^n$ are symmetric and there exist $\delta,M>0$ such that $\text{\rm tr}(D^2u(x))>\delta$, $\text{\rm tr}(D^2u^n(x))>\delta$ and $|D^2u(x)|\leq M$, $|D^2u^n(x)|\leq M$ for any $x$, then there exists $L\geq 1$ such that
\begin{equation}\label{eq:stability}
\|\mathbf{P} - \mathbf{P}^n\|_{L^2(\Omega)}\leq L \|D^2u - D^2u^n\|_{L^2(\Omega)}.
\end{equation}
\end{theorem}
\begin{proof}
Set $x\in\Omega$ and define $\mathcal M :=\{\mathbf{Q}\in \mathbb{R}^{2\times 2},\;\det \mathbf{Q}=f(x),\;\mathbf{Q} = \mathbf{Q}^T\}$. Since $f(x)>0$ and
$\nabla(\det)\mathbf{Q}=\text{adj}(\mathbf{Q})\neq0$ on $\mathcal M$, the Implicit Function Theorem \cite{johnlee} implies that $\mathcal M$ is a $C^\infty$ embedded submanifold
of $\mathbb{S}_2 := \{\mathbf{Q}\in \mathbb{R}^{2\times 2},\;\;\mathbf{Q} = \mathbf{Q}^T\}$. If $\text{\rm tr}(\mathbf H)\neq 0 $, we can define the nearest-point projection onto $\mathcal M$ as
$$\Pi_{\mathcal{M}}(\mathbf{H}) := \argmin_{\mathbf{Q}\in \mathcal{M}}|\mathbf{Q} - \mathbf{H}|.$$
For the existence and uniqueness of the nearest-point projection, one can refer to \cite{glowinski}. Moreover, if $\textit{\rm tr}(\mathbf H)\geq \delta >0 $, then $\Pi_{\mathcal M}(\mathbf{H})\succ 0$, which corresponds to the definition \eqref{eq:firstmin_extended} when $\mathbf{H}=D^2u^n$. By \cite{leobacher}, since $\mathcal M$ is a $C^\infty$ submanifold of $\mathbb{S}_2$, the map $\Pi_{\mathcal M}:U\to\mathcal M$ is $C^\infty$ on  $U:=\{\mathbf{Q}\in\mathbb{S}_2,\;\textit{\rm tr}(\mathbf Q)\geq \delta >0 \}\supset\mathcal M$.  Moreover,
on any compact $K\subset U$ its derivative is bounded $L \;=\;\sup_{Y\in K}\|D\Pi_{\mathcal M}(Y)\| \;<\;\infty$. By the hypotheses $|D^2u(x)|,|D^2u^n(x)|\le M$ and $\text{\rm tr}(D^2u(x)),\text{\rm tr}(D^2u^n(x))\ge\delta >0$, both $D^2u(x)$ and $D^2u^n(x)$ lie in a fixed compact $K\subset U$.  Therefore
\[
|\mathbf{P}(x) - \mathbf{P}^n(x)|
=|\Pi_{\mathcal M}(D^2u(x))-\Pi_{\mathcal M}(D^2u^n(x))|
\le
L|D^2u(x) - D^2u^n(x)|,
\]
which is the claimed estimate. The Lipschitz constant $L$ is bigger or equal than one, indeed, if $D^2u(x),D^2u^n(x)\in\mathcal{M}$, then  
\[
|\mathbf{P}(x) - \mathbf{P}^n(x)|
=|D^2u(x) - D^2u^n(x)|.
\]
To obtain the result it suffices to integrate the pointwise bound over \(\Omega\) and apply the definition of the \(L^2\)‐norm.
\end{proof}

This result quantifies the stability of the nearest-point projection with respect to perturbations in the data. 

We now turn to the discretized problem on the shape-regular mesh $\mathcal{T}_h$ introduced in \Cref{ssec:fe}. Given the discrete Hessian \(D^2_h u^n_h\) defined by \eqref{eq:hessian}, we define \(\mathbf{P}^n_h\) as the piecewise linear matrix field on $\mathcal{T}_h$ whose nodal values are the pointwise solutions to \eqref{eq:firstmin_extended} with input data $D^2_hu_h^n$. The next result quantifies the error introduced by this finite‐element discretization.
\begin{theorem}\label{lemma:nonlinear_estimate} 
Let $\Omega$ be a bounded convex domain with Lipschitz boundary and assume $D^2u^n\in H^2(\Omega;\mathbb{R}^{2\times 2})$. If $|f|\leq K$ a.e., $K>0$, and if $D^2u^n,D^2_hu^n_h$ are symmetric and there exist $\delta,M>0$ such that $\text{\rm tr}(D^2u^n(x))>\delta$, $\text{\rm tr}(D^2_hu^n_h(x))>\delta$ and $|D^2u^n(x)|\leq M$, $|D^2_hu^n_h(x)|\leq M$ for any $x$, then there exists $C> 0, L>1$ such that
\begin{equation}\label{eq:nonlinear_estimate}
\|\mathbf{P}^n - \mathbf{P}^n_h\|_{L^2(\Omega)}\leq Ch^2\|\mathbf{P}^n\|_{H^2(\Omega)} +L\|D^2u^n - D^2_hu^n_h\|_{L^2(\Omega)}
\end{equation}
\end{theorem}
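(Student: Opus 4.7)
The plan is to introduce the nodal $\mathbb{P}_1$ Lagrange interpolant $r_h\mathbf{P}^n$ of the continuous minimizer as an intermediate quantity and split
\[
\|\mathbf{P}^n - \mathbf{P}^n_h\|_{L^2(\Omega)} \leq \|\mathbf{P}^n - r_h\mathbf{P}^n\|_{L^2(\Omega)} + \|r_h\mathbf{P}^n - \mathbf{P}^n_h\|_{L^2(\Omega)}.
\]
Since $\mathbf{P}^n\in H^2(\Omega;\mathbb{R}^{2\times 2})$, the first summand is a standard $\mathbb{P}_1$ interpolation error bounded above by $Ch^2\|\mathbf{P}^n\|_{H^2(\Omega)}$. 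All the analysis then concentrates on showing that the second summand is controlled by $\|D^2u^n - D^2_hu^n_h\|_{L^2(\Omega)}$, up to higher-order interpolation remainders.

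For that second summand, I would use the fact that both $r_h\mathbf{P}^n$ and $\mathbf{P}^n_h$ are piecewise linear, so their difference is determined by its nodal values. At every vertex $z$ of $\mathcal{T}_h$,
\[
(r_h\mathbf{P}^n - \mathbf{P}^n_h)(z) = \Pi_{\mathcal{M}(f(z))}\bigl(D^2u^n(z)\bigr) - \Pi_{\mathcal{M}(f(z))}\bigl(D^2_hu^n_h(z)\bigr),
\]
and the hypotheses on the trace and Frobenius norm of $D^2u^n$ and $D^2_h u^n_h$ place both inputs in a fixed compact subset of the open set $U$ of Theorem~\ref{lemma:stability}. The pointwise Lipschitz bound proved there then yields $|(r_h\mathbf{P}^n - \mathbf{P}^n_h)(z)| \leq L\,|D^2u^n(z) - D^2_hu^n_h(z)|$, with $L$ uniform in $z$. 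I would then invoke the norm equivalence $\|v_h\|_{L^2(K)}^2 \simeq h_K^2\sum_{z\in K}|v_h(z)|^2$ for piecewise linear functions on a shape-regular mesh: first in the upper direction applied to $r_h\mathbf{P}^n - \mathbf{P}^n_h$, then in the lower direction applied to the piecewise linear auxiliary field $r_h D^2u^n - D^2_hu^n_h$, whose nodal values are precisely $D^2u^n(z) - D^2_hu^n_h(z)$. Summing over elements this gives
\[
\|r_h\mathbf{P}^n - \mathbf{P}^n_h\|_{L^2(\Omega)} \leq CL\|r_h D^2u^n - D^2_hu^n_h\|_{L^2(\Omega)},
\]
and one last triangle inequality with the interpolation estimate $\|D^2u^n - r_h D^2u^n\|_{L^2(\Omega)} \leq Ch^2\|D^2u^n\|_{H^2(\Omega)}$ produces $CL\bigl(\|D^2u^n - D^2_hu^n_h\|_{L^2(\Omega)} + h^2\|D^2u^n\|_{H^2(\Omega)}\bigr)$.

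Combining the two steps yields \eqref{eq:nonlinear_estimate}, modulo identifying the interpolation remainder $h^2\|D^2u^n\|_{H^2(\Omega)}$ with $h^2\|\mathbf{P}^n\|_{H^2(\Omega)}$. This identification is the main technical obstacle: one has to bound $\|D^2u^n\|_{H^2(\Omega)}$ by $\|\mathbf{P}^n\|_{H^2(\Omega)}$ up to constants depending on the data. Since $\mathbf{P}^n = \Pi_{\mathcal{M}(f)}(D^2u^n)$ and the projection is $C^\infty$ on the compact set identified in Theorem~\ref{lemma:stability}, a chain-rule computation for second derivatives—with constants depending on $\delta$, $M$, and on the smoothness of $f$—provides the required equivalence. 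The remaining ingredients, namely the nodal/$L^2$ norm equivalence on $\mathbb{P}_1$ spaces and the pointwise Lipschitz property of the projection, are routine on a shape-regular mesh.
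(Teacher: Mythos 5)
Your route is essentially the paper's: triangle inequality with the Lagrange interpolant $r_h\mathbf{P}^n$, the $\mathcal{O}(h^2)$ interpolation estimate for $\mathbf{P}^n\in H^2(\Omega;\mathbb{R}^{2\times2})$, and then the pointwise Lipschitz bound of \Cref{lemma:stability} applied at the vertices (where $r_h\mathbf{P}^n$ and $\mathbf{P}^n_h$ are nodal projections of $D^2u^n(z)$ and $D^2_hu^n_h(z)$) combined with nodal/$L^2$ equivalence for piecewise linear fields. The paper compresses all of this into ``interpolation estimates for $r_h$ and its continuity''; you are in fact more careful, since you make explicit the remainder $CLh^2\|D^2u^n\|_{H^2(\Omega)}$ that appears when $r_hD^2u^n$ is replaced by $D^2u^n$, a term the paper's one-line argument silently drops.

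The one step of your plan that does not work is the proposed ``identification'' $\|D^2u^n\|_{H^2(\Omega)}\lesssim\|\mathbf{P}^n\|_{H^2(\Omega)}$ via the chain rule. Smoothness of $\Pi_{\mathcal M}$ controls the image by the pre-image: it yields bounds on derivatives of $\mathbf{P}^n=\Pi_{\mathcal M}(D^2u^n)$ in terms of those of $D^2u^n$ (and of $f$), which is exactly what guarantees $\mathbf{P}^n\in H^2$, but not the converse. The projection collapses the direction normal to $\mathcal M$, so it is far from injective; $D^2u^n$ may vary strongly in that direction while $\mathbf{P}^n$ is unchanged, and no bound of $\|D^2u^n\|_{H^2}$ by $\|\mathbf{P}^n\|_{H^2}$ can hold. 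The harmless fix is simply to keep the remainder, i.e. to prove $\|\mathbf{P}^n-\mathbf{P}^n_h\|_{L^2(\Omega)}\le Ch^2\big(\|\mathbf{P}^n\|_{H^2(\Omega)}+\|D^2u^n\|_{H^2(\Omega)}\big)+CL\|D^2u^n-D^2_hu^n_h\|_{L^2(\Omega)}$, which is legitimate under the hypothesis $D^2u^n\in H^2(\Omega;\mathbb{R}^{2\times2})$, is of the same order in $h$, and is what the paper's own argument amounts to once this term is tracked; only the cosmetic labeling of the $h^2$-coefficient differs from the stated \eqref{eq:nonlinear_estimate}.
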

\begin{proof}
Let $r_h:C^0(\Omega)\to V_h$ be the Lagrange interpolant \cite{brenner_fem} on $\mathcal{T}_h$ shape-regular mesh. Then, 
\begin{align*}
\|\mathbf{P}^n - \mathbf{P}^n_h\|_{L^2(\Omega)}=& \|\mathbf{P}^n - r_h(\mathbf{P}^n) + r_h(\mathbf{P}^n)- \mathbf{P}^n_h\|_{L^2(\Omega)}\\
\leq &\|\mathbf{P}^n - r_h(\mathbf{P}^n)\|_{L^2(\Omega)} + \|r_h(\mathbf{P}^n)- \mathbf{P}^n_h\|_{L^2(\Omega)}\\
\leq &Ch^2\|\mathbf{P}^n\|_{H^2(\Omega)} + CL\|D^2u^n- D^2_hu^n_h\|_{L^2(\Omega)},
\end{align*}
where we use standard interpolation estimates for $r_h$ \cite{brenner_fem} and its continuity. 
\end{proof}

If $D^2u^n = D^2_hu^n_h$, then the error converges with second-order accuracy with respect to the mesh size $h$.

\section{Error indicators for the Monge-Amp\`ere equation}\label{sec:errorind}
In \Cref{sec:biharmonic,sec:hessian,ssec:nonlinear} we have derived the error estimates for the two subproblems \eqref{eq:firstmin}-\eqref{eq:biharmonic} separately. Now, let $u$ be the solution to the least-squares problem \eqref{eq:leastsq}, and assume that we know $\mathbf{P}\in H^2(\Omega;\mathbb{R}^{2\times 2})$. Then one shows that 
\begin{align*}
\|\omega - \omega^{n+1}_h\|_{L^2(\Omega)}\lesssim&\, h^2 + \|\mathbf{P}-\mathbf{P}^n_h\|_{L^2(\Omega)} +\|\phi-\phi^n_h\|_{H^{-\frac{1}{2}}(\partial\Omega)},\\
\| \nabla (u - u^{n+1}_h)\|_{L^2(\Omega)}\lesssim&\, h + \|\omega - \omega^{n+1}_h\|_{H^{-1}(\Omega)} +\|g-g_h\|_{H^{\frac{1}{2}}(\partial\Omega)},\\
\|u -u_h^{n+1}\|_{L^2(\Omega)}\lesssim&\, h^2 + \|\omega - \omega^{n+1}_h\|_{H^{-1}(\Omega)} +\|g-g_h\|_{H^{-\frac{1}{2}}(\partial\Omega)},\\
\|D^2u -D^2_hu_h^{n+1}\|_{L^2(\Omega)}\lesssim&\,h^\alpha + \|\omega- \omega^{n+1}_h\|_{L^{2}(\Omega)},\quad 0<\alpha\leq 1.
\end{align*}
Conversely, let assume that $u\in H^4(\Omega)$ is known, then 
\begin{equation*}
\|\mathbf{P} - \mathbf{P}^{n+1}_h\|_{L^2(\Omega)}\lesssim \, h^2 + \|D^2u - D^2_hu^{n+1}_h\|_{L^2(\Omega)}.
\end{equation*}
These combined estimates identify the Hessian recovery step as the bottleneck of the iterative algorithm \eqref{eq:splitting}. In particular, even when \(\alpha=1\) (as observed for polynomial-preserving recovery (PPR) post‐processing in our numerical experiments), this term remains only first‐order in \(h\) and thus limits the overall convergence of \(\|\omega-\omega_h\|_{L^2(\Omega)}\). Indeed, compared to the estimates for the biharmonic problem alone, we expect the iterative algorithm to yield first-order convergence for the error in the $H^2$ norm. This is confirmed by the numerical results presented in \Cref{ssec:nonadapt}. The regularity assumption on the solution $u$ is standard in the error analysis of second-order fully nonlinear problems as well as of linear fourth-order problems \cite{brenner,brenner2,neilan}.

Regarding the \textit{a posteriori} bounds, \Cref{lemma:aposteriori} provides element‐wise estimators \(\eta_K,\hat\eta_K\) that control all components of the splitting error except the data perturbation (\textit{e.g.} $\mathbf{P}-\mathbf{P}^{n+1}_h$).  However, in the \(H^1\)-seminorm the contribution of boundary and right-hand‐side data errors decays at the same rate, or faster, than the estimator itself.  Consequently, \(\hat\eta_K\) remains a reliable, first‐order indicator of the total error in the $H^1$ norm.  We therefore define the global refinement indicator
\[
\hat{\eta} := \left( \sum_{K \in \mathcal{T}_h} \hat{\eta}_K^2 \right)^{1/2},
\]  
where each \(\hat\eta_K\) is given in \eqref{eq:etahatK}.  As \(h\to0\), \(\hat\eta\) converges at order \(\mathcal O(h)\) in the \(H^1\)–seminorm and thus this indicator is used to adaptively refine the mesh.  

\section{Numerical results}\label{sec:num}
We begin by validating our estimates on an independent biharmonic problem only, as in \Cref{sec:biharmonic}. Then, we validate the full framework of \Cref{sec:biharmonic,sec:hessian} on several test cases for the Monge-Amp\`ere equation, and we examine whether the \emph{a priori} and \emph{a posteriori} convergence rates from \eqref{eq:biharmonic} (see \Cref{lemma:apriori,lemma:aposteriori}) extend to the iterative algorithm. Four experiments are performed: two within the regularity assumptions, with $u\in C^{\infty}(\Omega)$, and two cases are testing robustness, with $u\notin H^2(\Omega)$. We also assess adaptive refinement driven by the estimator in \Cref{lemma:aposteriori} in \Cref{ssec:adapt}. All meshes are generated with \texttt{bl2d} \cite{bl2d}; \Cref{fig:mesh} shows a typical pre-adaptation mesh. Throughout all the experiments, the nonlinear solver for \eqref{eq:firstmin} uses the $Qmin$ algorithm from \Cref{ssec:nonlinear}, converging in $3$–$5$ iterations.
  \begin{figure}[tbp]
    \centering
    \includegraphics[width=0.35\linewidth]{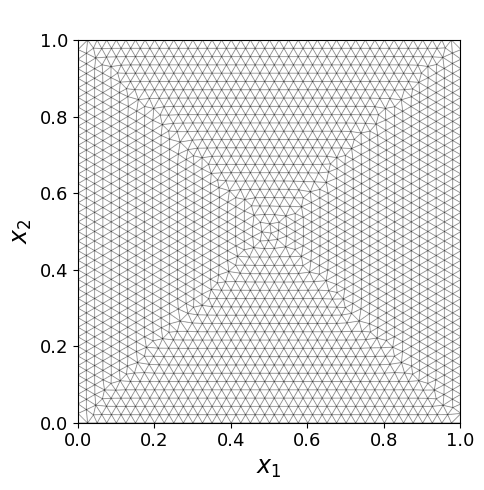}
    \caption{Unstructured frontal mesh ($h = 0.025$) generated with \texttt{bl2d}.}
    \label{fig:mesh}
  \end{figure}

\subsection{Preliminary test case: biharmonic problem}\label{sec:numbiharmonic}
Let $\Omega=[0,1]^2$. We consider the following problem: 
\begin{equation*}
\begin{cases}
\Delta ^2u = ( x_1^4 + x_2^4 + 2x_1^2 x_2^2 + 8x_1^2 + 8x_2^2 + 8)e^{ \frac{1}{2}x_1^2 + \frac{1}{2}x_2^2 }\quad &\text{in }\Omega,\\
\Delta u = ( x_1^2 + x_2^2 + 2)e^{ \frac{1}{2}x_1^2 + \frac{1}{2}x_2^2 }\quad &\text{on }\partial\Omega,\\
u = e^{ \frac{1}{2}x_1^2 + \frac{1}{2}x_2^2 }\quad &\text{on }\partial\Omega.\\
\end{cases}
\end{equation*}
where $u_{ex}(x_1,x_2) = e^{ \frac{1}{2}x_1^2 + \frac{1}{2}x_2^2 }$. \Cref{fig:biharmonic} (left) displays the approximated solution $u_h$ with $h= 0.025$, while \Cref{fig:biharmonic} (right) shows the convergence rates of $u_h$ and $\omega_h$ and its derivatives as $h \to 0$. We confirm the expected rates predicted for \eqref{eq:biharmonic}; namely:
$$\|u - u_h\|_{L^2(\Omega)} = \mathcal{O}(h^2), \quad \|\nabla(u - u_h)\|_{L^2(\Omega)} = \mathcal{O}(h), $$
and 
$$\|\omega - \omega_h\|_{L^2(\Omega)} = \mathcal{O}(h^2),\quad \|\nabla(\omega - \omega_h)\|_{L^2(\Omega)} = \mathcal{O}(h).$$

\begin{figure}[tbp]
\centering
\begin{subfigure}{0.4\textwidth}
\centering
\includegraphics[width=0.8\textwidth]{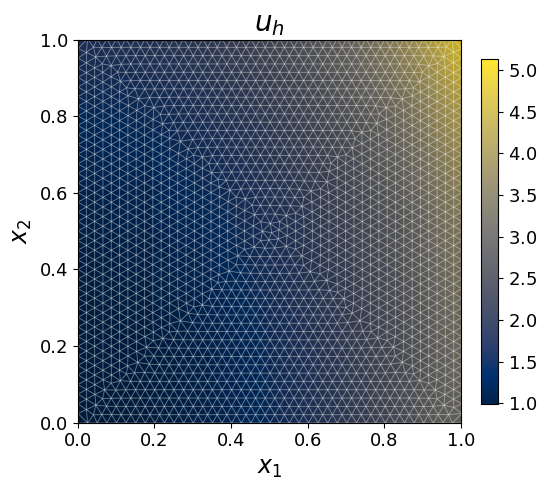}
\end{subfigure}
\hspace{0,2cm}
\begin{subfigure}{0.45\textwidth}
\centering
\includegraphics[width=0.8\textwidth]{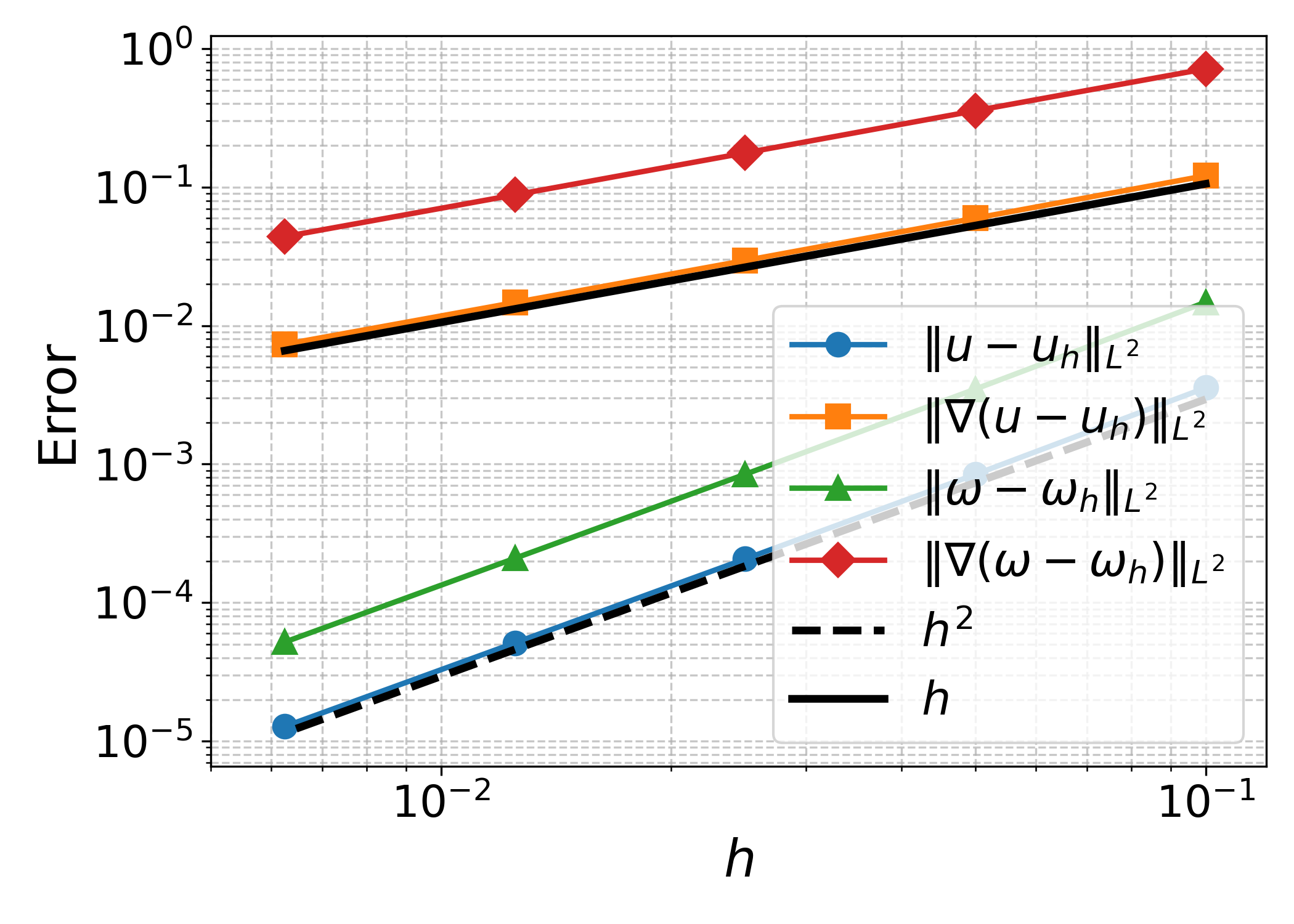}
\end{subfigure}
\caption{Biharmonic test problem. Left: plot of the numerical solution $u_h$ ($h = 0.025$). Right: errors vs. $h$.}
\label{fig:biharmonic}
\end{figure}
  
\subsection{Numerical results on non-adapted meshes}\label{ssec:nonadapt}

\subsubsection{First test case} \label{sssec:example1}
Let $\Omega = [0,1]^2$, and consider the test problem defined by  
\[
f(x_1, x_2) = 1 + (x_1^2 + x_2^2) e^{x_1^2 + x_2^2}, \quad   g(x_1, x_2) = e^{\frac{1}{2}(x_1^2 + x_2^2)},
\]
whose exact solution is the smooth radial function  $u(x_1, x_2) = e^{\frac{1}{2}(x_1^2 + x_2^2)}$, $(x_1, x_2) \in \Omega$. \Cref{fig:exp_approx} (left) displays the approximated solution $u_h$, while \Cref{fig:exp_approx} (right) shows the  pointwise error. In \Cref{fig:exp_iter} (left), we plot the decay of the error in $H^2$ norm as the number of splitting iterations increases. The number of iterations required for convergence grows as the mesh is refined, reaching approximately $25$ iterations for the smallest mesh size ($h = 0.00625$). A similar convergence trend is observed for $\|D^2_h u_h^{n} - \mathbf{P}^n_h\|_{L^2(\Omega)}$, consistent with the discussion in \Cref{rem:distance} (see \Cref{fig:exp_iter} (right)). \Cref{fig:exp_h} (left) presents the convergence rates of $u_h$ and its derivatives as $h \to 0$. It confirms the expected rates discussed in \Cref{sec:errorind}; namely:
\[
\|u - u_h\|_{L^2(\Omega)} = \mathcal{O}(h^2), \quad
\|\nabla(u - u_h)\|_{L^2(\Omega)} = \mathcal{O}(h), \quad
\|\omega - \omega_h\|_{L^2(\Omega)} = \mathcal{O}(h).
\]
These results implies that the error $\|\omega -\omega_h\|_{H^{-1}(\Omega)}$ scales at least as $h^2$ for this numerical example. The results are confirmed in \Cref{tab:errormin}. Furthermore, due to the improved accuracy of the post-processed gradient $G_h$, which converges with order $\mathcal{O}(h^2)$, the overall error in $H^2$ norm also exhibits linear convergence with respect to $h$. Finally, $\|D^2_h u_h^n - \mathbf{P}^n_h\|_{L^2(\Omega)}$ itself decays approximately linearly in $h$, making it a reliable proxy for the error in $H^2$ norm (\Cref{fig:exp_h} (right)).
\begin{figure}[tbp]
\centering
\begin{subfigure}[t]{0.45\linewidth}
\centering
\includegraphics[width=0.84\linewidth]{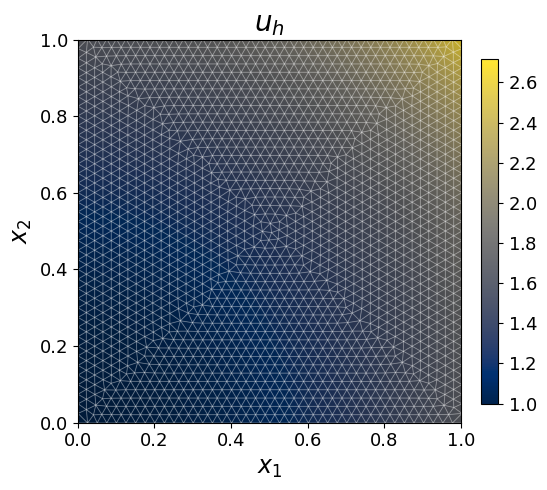}
\end{subfigure}
\begin{subfigure}[t]{0.45\linewidth}
\centering
\includegraphics[width=0.84\linewidth]{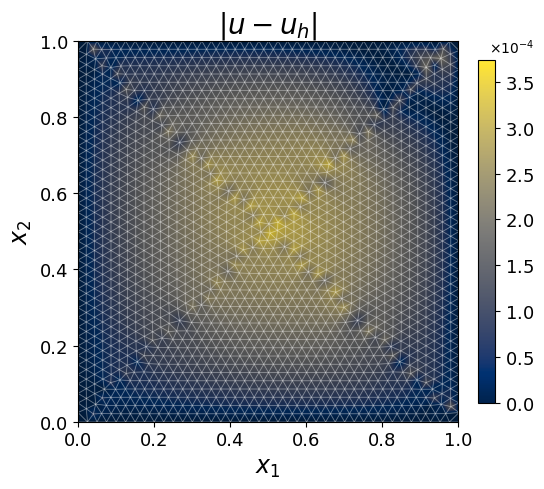}
\end{subfigure}
\caption{First test problem. Left: plot of the numerical solution $u_h$ ($h = 0.025$). Right: plot of the pointwise error ($h = 0.025$).}
\label{fig:exp_approx}
\end{figure}

\begin{figure}[tbp]
\centering
\begin{subfigure}[t]{0.42\linewidth}
\centering
\includegraphics[width=0.85\linewidth]{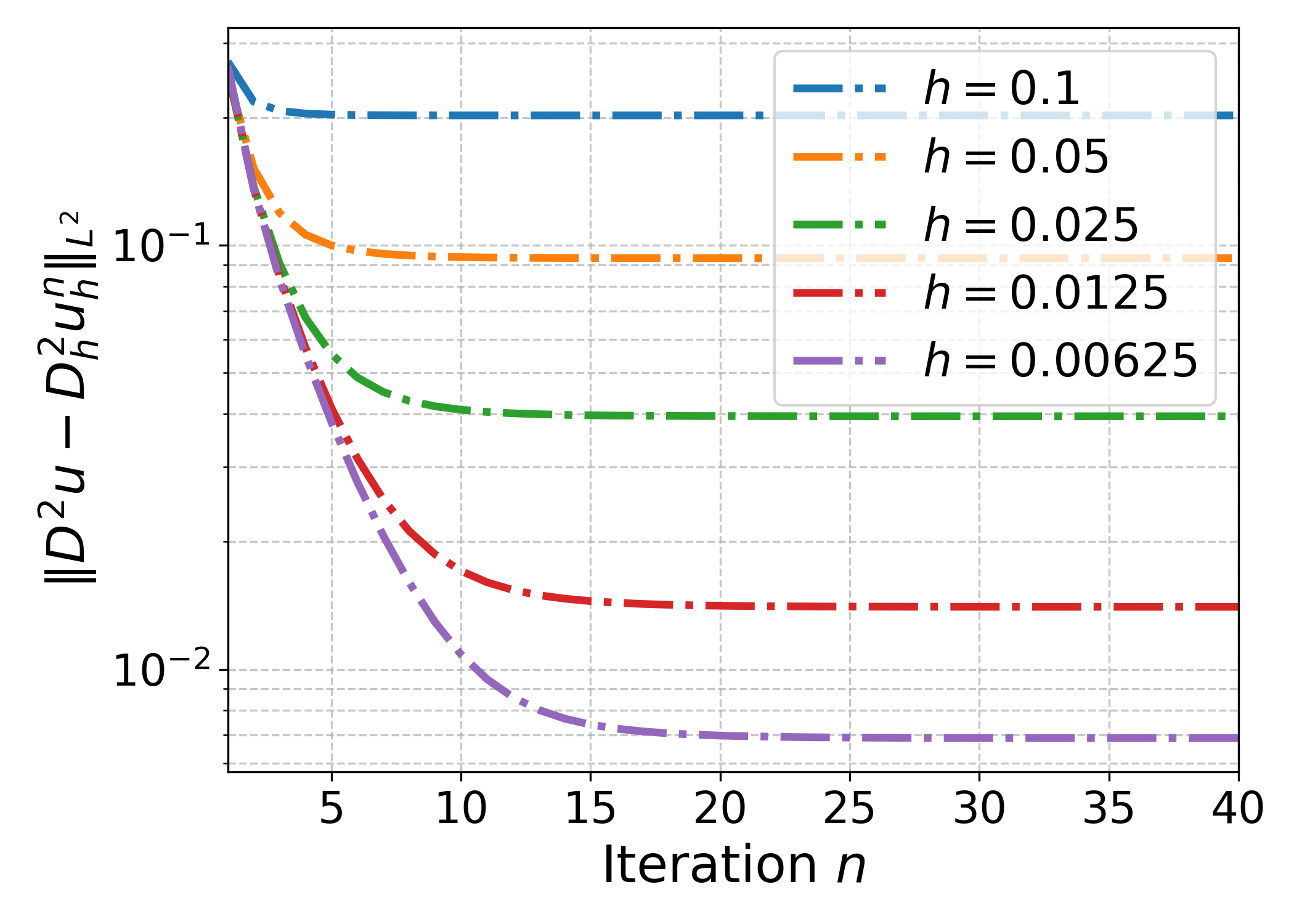}
\end{subfigure}
\hspace{0.3cm}
\begin{subfigure}[t]{0.42\linewidth}
\centering
\includegraphics[width=0.85\linewidth]{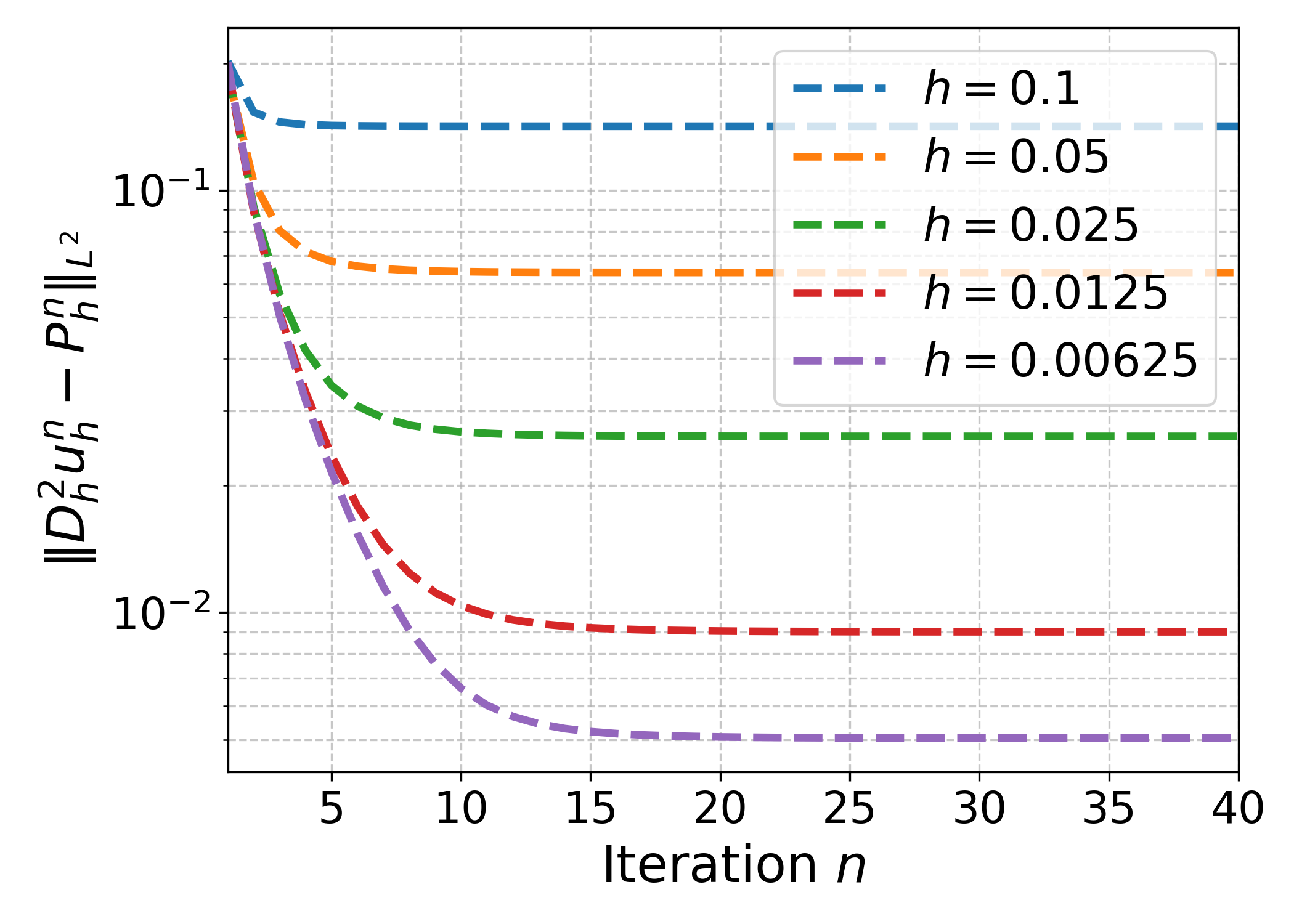}
\end{subfigure}
\caption{First test problem. Left: $\|D^2u^n - D^2_hu_h^n\|_{L^2(\Omega)}$ vs. splitting iterations for different values of $h$. Right: $\|D^2_hu_h^n - \mathbf{P}^n_h\|_{L^2(\Omega)}$ vs. splitting iterations for different values of $h$.}
\label{fig:exp_iter}
\end{figure}

\begin{figure}[tbp]
\centering
\begin{subfigure}[t]{0.42\linewidth}
\centering
\includegraphics[width=0.85\linewidth]{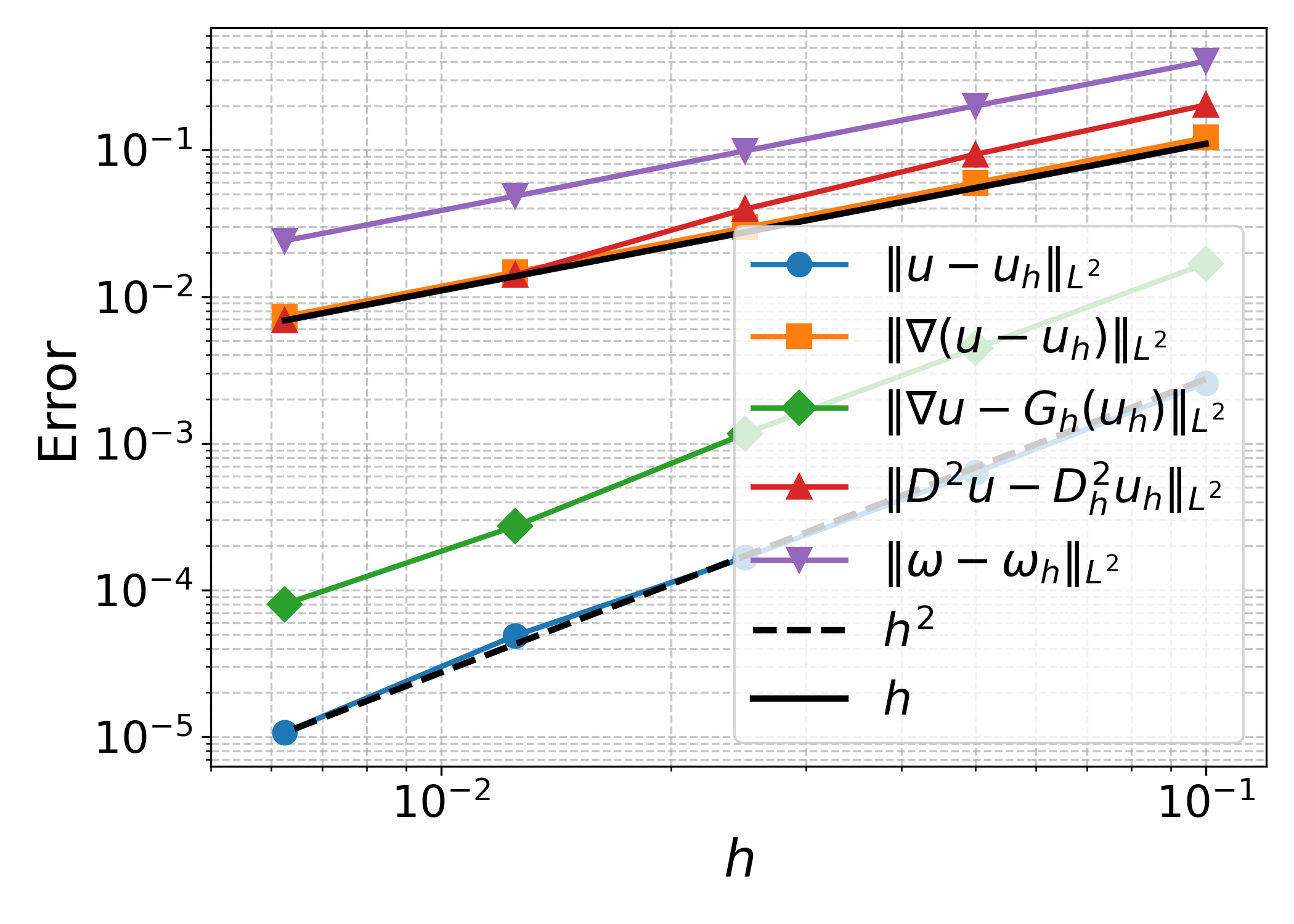}
\end{subfigure}
\hspace{0.3cm}
\begin{subfigure}[t]{0.42\linewidth}
\centering
\includegraphics[width=0.85\linewidth]{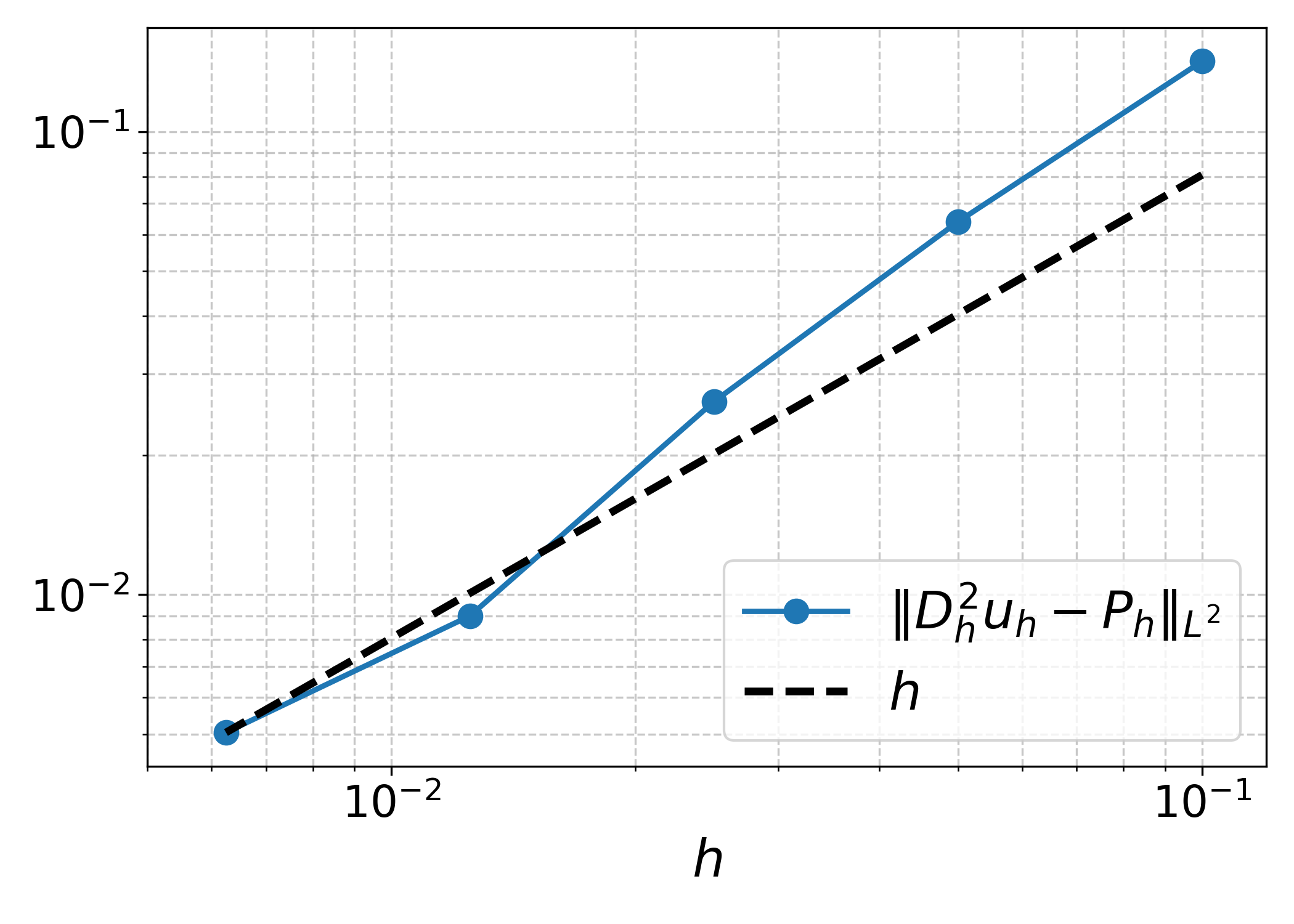}
\end{subfigure}
\caption{First test problem. Left: errors vs. $h$. Right: $\|D^2_hu_h^n - \mathbf{P}^n_h\|_{L^2(\Omega)}$ vs. $h$.}
\label{fig:exp_h}
\end{figure}

\begin{table}[tbp]
\caption{Error $\|\omega-\omega_h\|_{H^{-1}(\Omega)}$ for the first and second test cases.}
\label{tab:errormin}
\centering
\begin{tabular}{c| c c c c c } 
 $h$ & $0.1$ & $0.05$ &$0.025$ & $0.0125$ &$0.00625$ \\ 
 \hline
 \hline
 First test case & $0.6807$ & $0.1514$ & $0.0491$ & $0.0125$ & $0.0026$ \\ 
 \hline
Third test case, $R=2$ & $7.6\cdot 10^{5}$ & $2.5\cdot 10^{-5} $&$6.3\cdot 10^{-6} $&$2.3\cdot 10^{-6} $&$4.4\cdot 10^{-7}$ \\ 
\end{tabular}
\end{table}

\subsubsection{Second test case}\label{sssec:example3}

Let $\Omega = [0,1]^2$ and consider the test problem defined, for \( R \geq \sqrt{2} \), by
\[
f(x_1, x_2) = \frac{R^2}{\left(R^2 - (x_1^2 + x_2^2)\right)^2}, \quad 
g(x_1, x_2) = -\sqrt{R^2 - (x_1^2 + x_2^2)},
\]
whose exact solution is the convex function $u(x_1, x_2) = -\sqrt{R^2 - (x_1^2 + x_2^2)}$, with $(x_1, x_2) \in \Omega$. When \( R > \sqrt{2} \), the exact solution \( u \) belongs to \( C^\infty(\overline{\Omega}) \). However, when \( R = \sqrt{2} \), $u$ is smooth on every compact subset of $\Omega$ but \( u \notin H^2(\Omega) \), due to the singularity of the gradient of $u$ at the corner \((1,1)\). This makes it particularly interesting to investigate the performance of the algorithm and the quality of the approximation as \( R \to \sqrt{2}^+ \). To this end, we consider three representative values: \( R = 2 \), \( R = \sqrt{2} + 0.1 \), and \( R = \sqrt{2} + 0.01 \). Notably, for the smallest value of \( R \), convergence could not be achieved in the original work of \cite{caboussat}. \Cref{fig:root_approx} displays the graphs of the computed solutions \( u_h \) for each value of \( R \) with mesh size \( h = 0.025 \), while \Cref{fig:root_error} shows the corresponding nodal errors. As \( R \) decreases, the error becomes more concentrated near the singularity at \((1,1)\). Nevertheless, in contrast to the findings in \cite{caboussat}, our method achieves convergence even for \( R = \sqrt{2} + 0.01 \), as evidenced in \Cref{fig:root_herr}. The observed convergence orders are consistent with those predicted and $\|\omega -\omega_h\|_{H^{-1}(\Omega)}$ scales at least as $h^2$ for this numerical example (see \Cref{tab:errormin}). Moreover the number of splitting iterations to reach convergence is around $20$ for the smallest mesh size independently of the value of $R$. Lastly, we consider the critical case $\sqrt{2}$. Here, neither the \textit{a priori} nor the \textit{a posteriori} estimates from \Cref{lemma:apriori,lemma:aposteriori} apply, yet it remains useful to evaluate how our algorithm performs when the exact solution fails to meet the regularity requirements of the least‐squares formulation \eqref{eq:leastsq}. \Cref{fig:root_sqr2} (left) plots the discretization errors against the mesh size $h$. Even in this singular setting, the error in $L^2$ norm converges at a rate $\mathcal{O}(h^{3/2})$, while the error in the $H^2$ norm decays like $\mathcal{O}(h^{1/2})$. The asymptotic rates are further confirmed by the error as function of the splitting iteration $n$ (\Cref{fig:root_sqr2} (right)).
\begin{figure}[tbp]
\centering
\begin{subfigure}[t]{0.32\linewidth}
\centering
\includegraphics[width=0.97\linewidth]{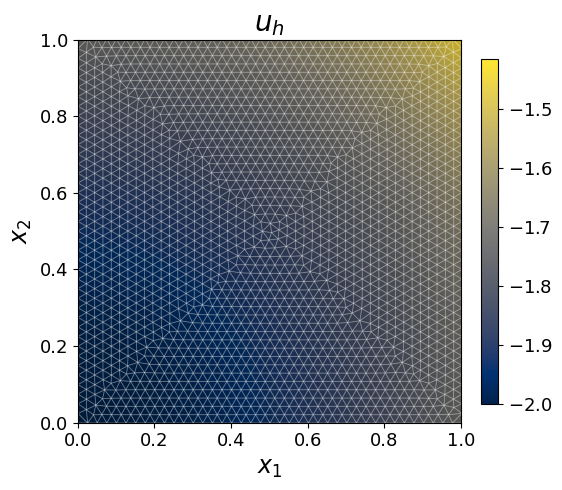}
\caption{$R=2$.}
\end{subfigure}
\begin{subfigure}[t]{0.32\linewidth}
\centering
\includegraphics[width=0.97\linewidth]{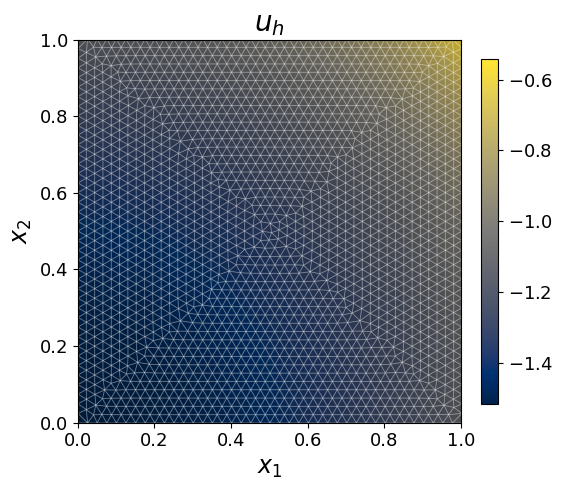}
\caption{$R=\sqrt{2} + 0.1$.}
\end{subfigure}
\begin{subfigure}[t]{0.32\linewidth}
\centering
\includegraphics[width=0.97\linewidth]{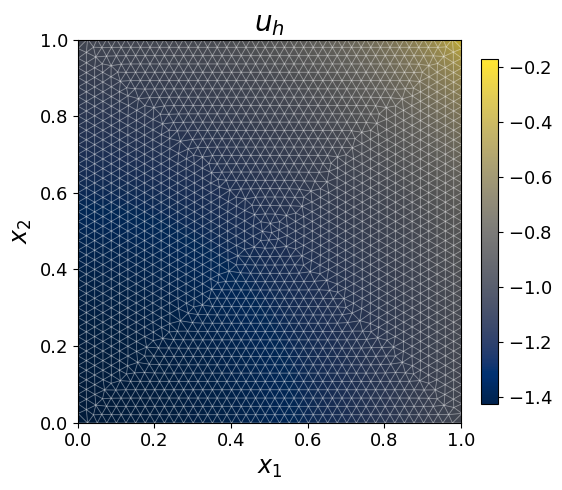}
\caption{$R=\sqrt{2} + 0.01$.}
\end{subfigure}
\caption{Second test problem. Plots of the numerical solution $u_h$ ($h = 0.025$) for $R=\{2,\sqrt{2}+0.1,\sqrt{2}+0.01\}$.}
\label{fig:root_approx}
\end{figure}

\begin{figure}[tbp]
\centering
\begin{subfigure}[t]{0.32\linewidth}
\centering
\includegraphics[width=0.97\linewidth]{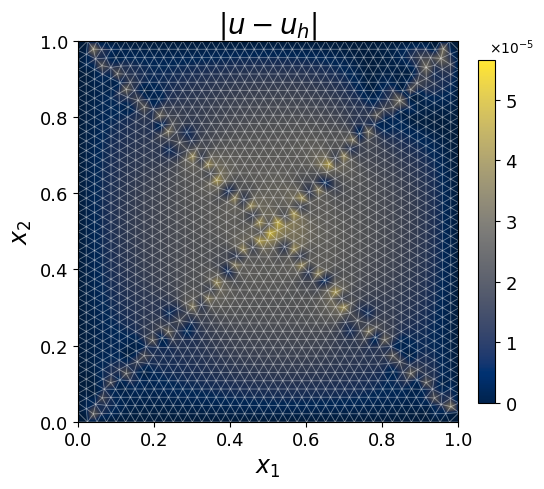}
\caption{$R=2$.}
\end{subfigure}
\begin{subfigure}[t]{0.32\linewidth}
\centering
\includegraphics[width=0.97\linewidth]{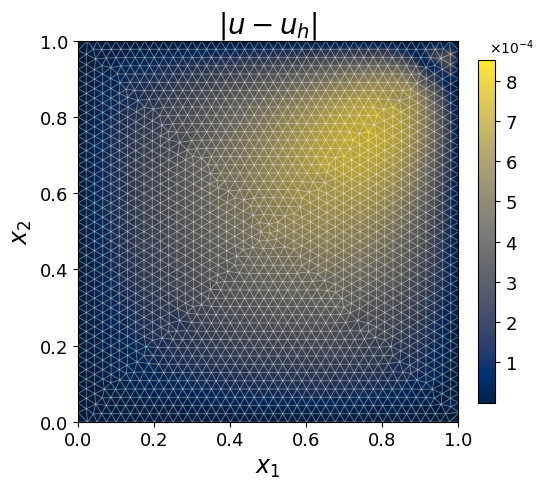}
\caption{$R=\sqrt{2} + 0.1$.}
\end{subfigure}
\begin{subfigure}[t]{0.32\linewidth}
\centering
\includegraphics[width=0.97\linewidth]{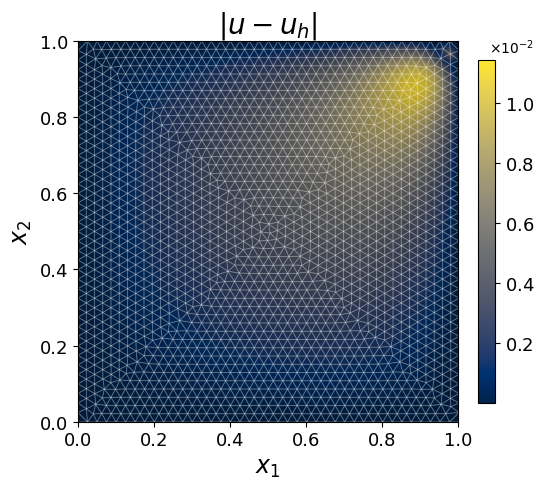}
\caption{$R=\sqrt{2} + 0.01$.}
\end{subfigure}
\caption{Second test problem. Plots of the pointwise error ($h = 0.025$) for $R=\{2,\sqrt{2}+0.1,\sqrt{2}+0.01\}$.}
\label{fig:root_error}
\end{figure}

\begin{figure}[tbp]
\centering
\begin{subfigure}[t]{0.325\linewidth}
\centering
\includegraphics[width=0.98\linewidth]{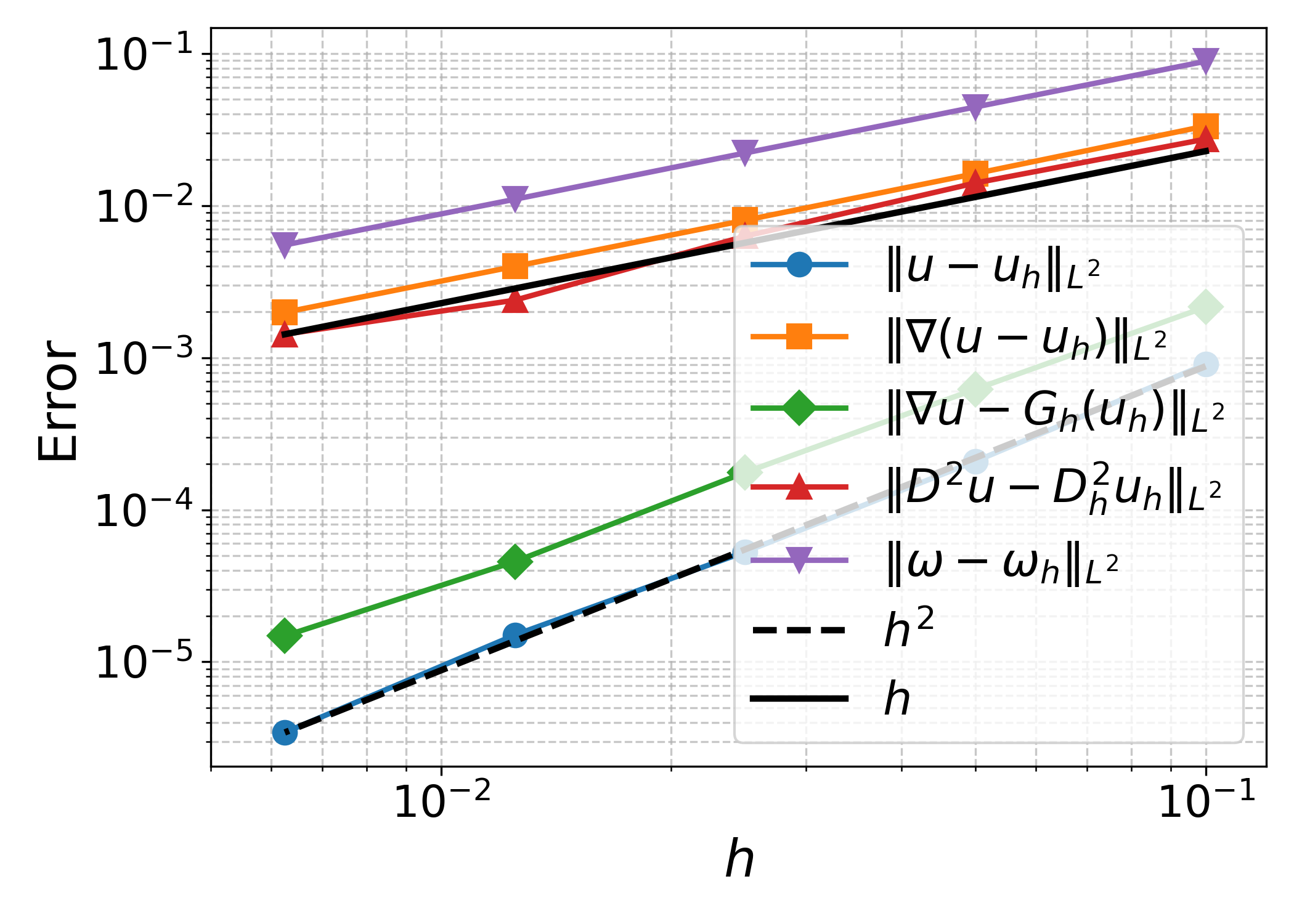}
\caption{$R=2$.}
\end{subfigure}
\begin{subfigure}[t]{0.325\linewidth}
\centering
\includegraphics[width=0.98\linewidth]{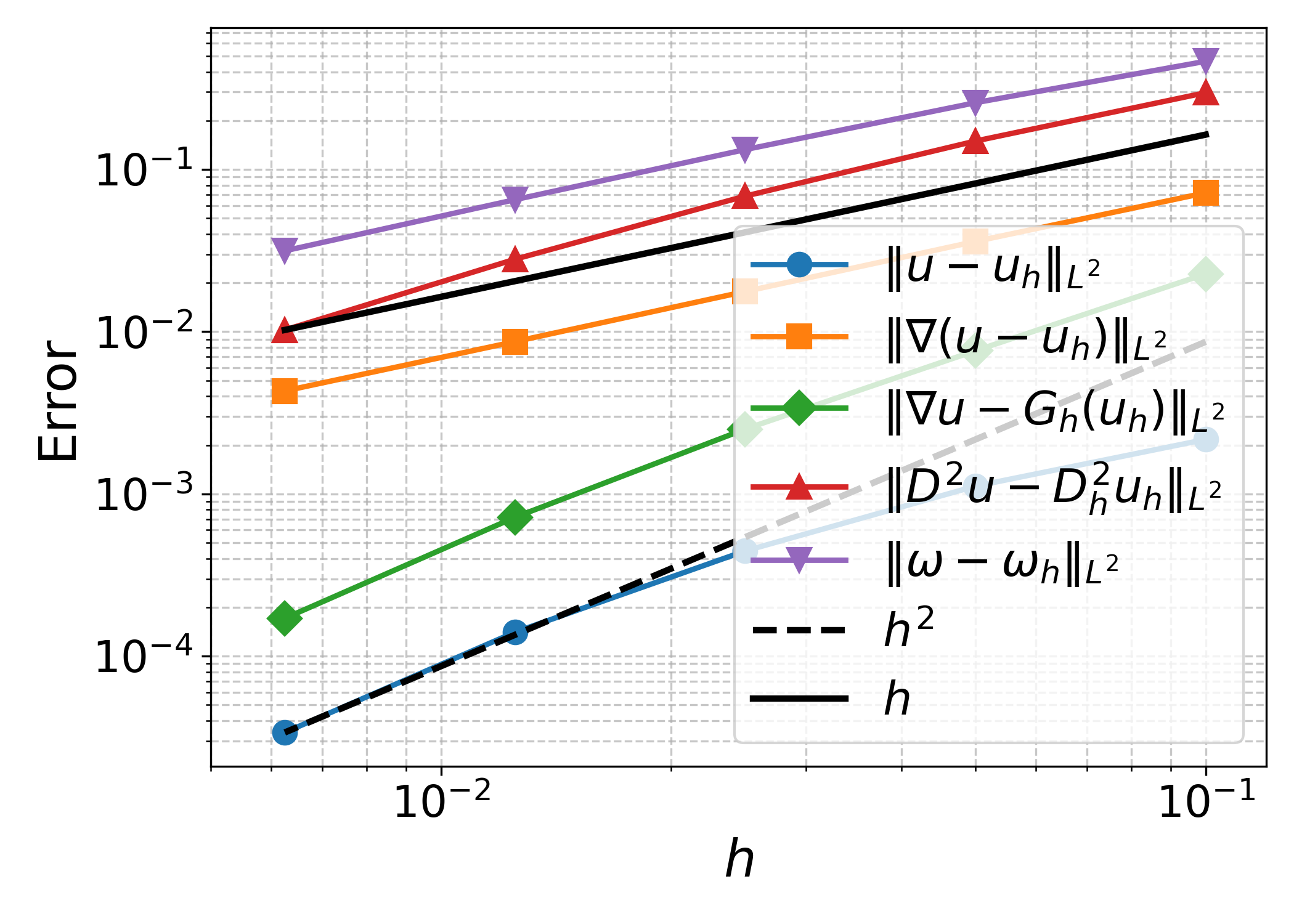}
\caption{$R=\sqrt{2} + 0.1$.}
\end{subfigure}
\begin{subfigure}[t]{0.325\linewidth}
\centering
\includegraphics[width=0.98\linewidth]{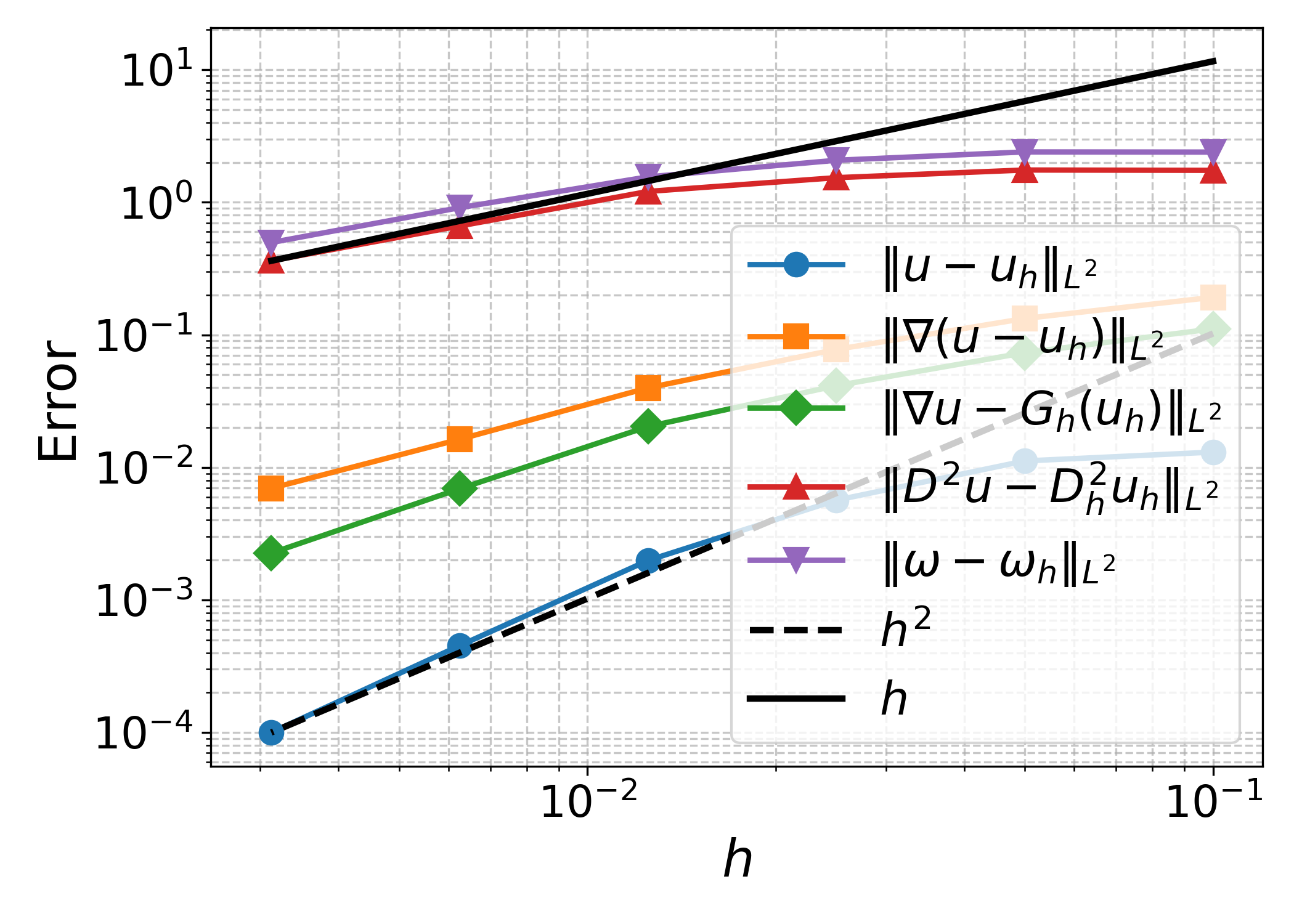}
\caption{$R=\sqrt{2} + 0.01$.}
\end{subfigure}
\caption{Second test problem. Error vs. $h$.}
\label{fig:root_herr}
\end{figure}

\begin{figure}[tbp]
\centering
\begin{subfigure}[t]{0.42\linewidth}
\centering
\includegraphics[width=0.8\linewidth]{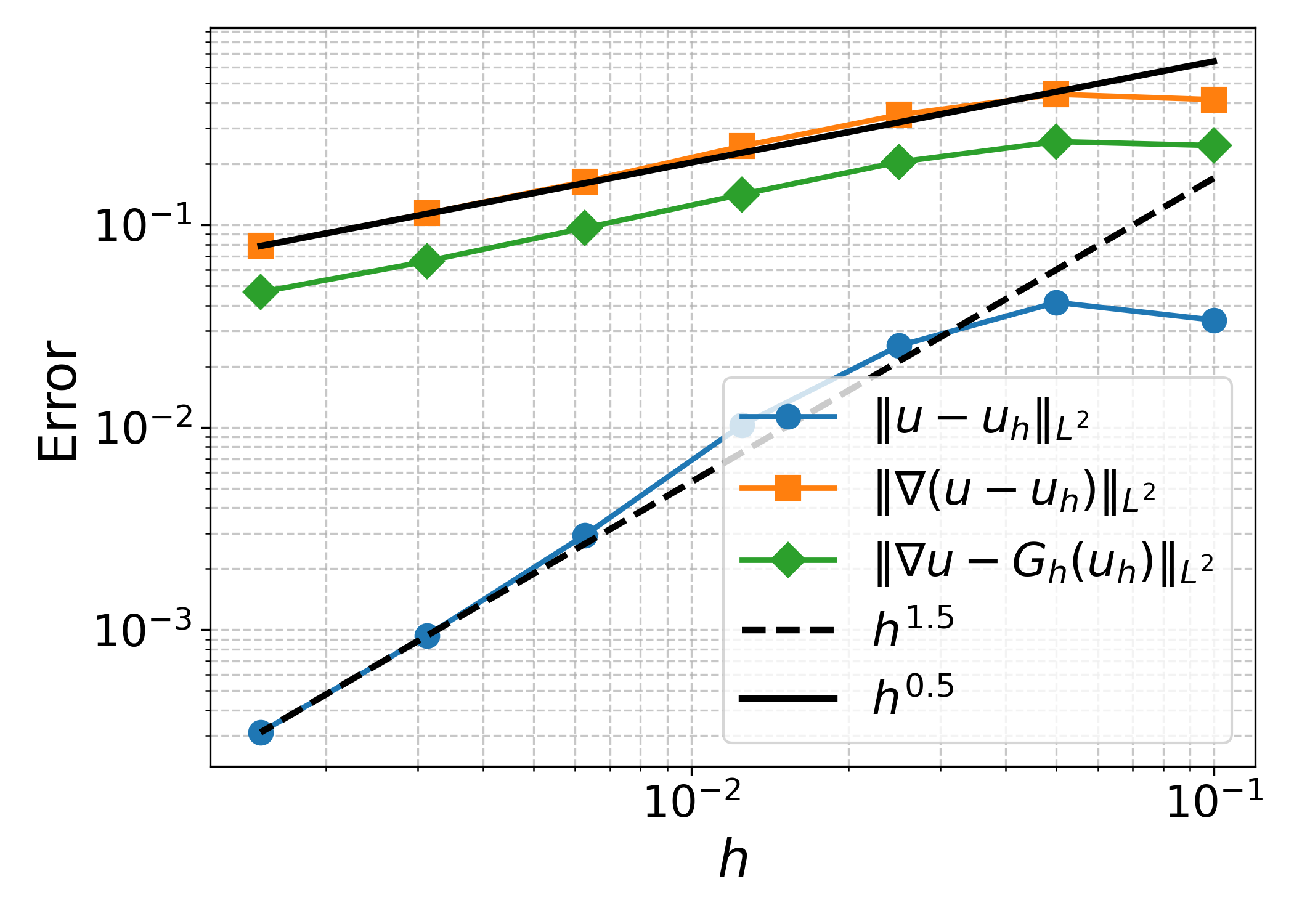}
\end{subfigure}
\hspace{0.3cm}
\begin{subfigure}[t]{0.42\linewidth}
\centering
\includegraphics[width=0.8\linewidth]{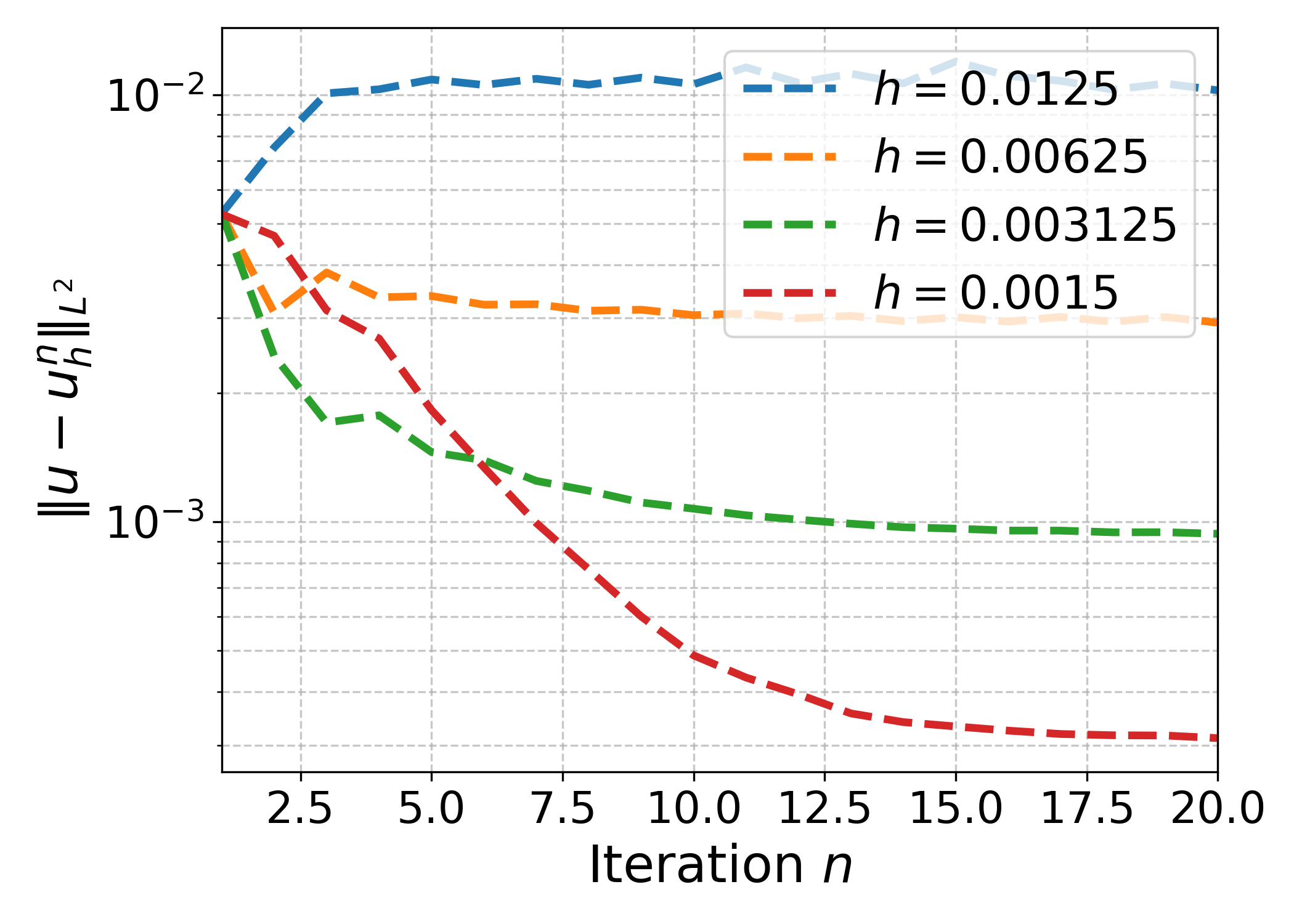}
\end{subfigure}
\caption{Second test problem, $R=\sqrt{2}$. Left: errors vs. $h$. Right: $\|D^2_hu_h^n - \mathbf{P}^n_h\|_{L^2(\Omega)}$ vs. $h$.}
\label{fig:root_sqr2}
\end{figure}

\subsubsection{Third test case}
We consider another nonsmooth example. Let $\Omega=[0,1]^2$, the problem is defined by $f(x_1, x_2) = 1$ and $g(x_1, x_2) = 0$. In this case, the Monge-Amp\`ere equation does not have solutions belonging to $H^2(\Omega)$ (it does, however, admit \textit{so-called} viscosity solutions \cite{gutierrez}), despite the smoothness of the data. The issue stems from the non-strict convexity of $\Omega$ \cite{gutierrez} and indeed the lack of regularity of the solution $u$ concentrates around the corners. Therefore, the solution obtained can only be compared with computational results from the literature, \textit{e.g.} \cite{neilan,lagrangian,caboussat,dimitrios_adaptive}. \Cref{fig:f1_approx} illustrates the approximated solution $u_h$ as well as $\text{\rm det}\:(D^2_hu_h)$. From the latter plot, it is clear that the numerical method fails to approximate the solution close to the corners. In order to have a better grasp of it, we also show some cross-section of the approximated solution $u_h$ (\Cref{fig:f1_values}). In particular, we observe that along the line $x_1=x_2$ (left), the approximated solution looses its convexity close to the boundary (\textit{i.e.} close to the corners). However, as expected, the solution reaches its minimum in the middle of $\Omega$. As $h$ decreases, the minimum decreases and the magnitude aligns with other numerical results from the literature, \textit{e.g.} \cite{neilan,caboussat}. On the other hand, we observe that as $h\to 0$, the determinant across the line approaches $1$ from below (\Cref{fig:f1_values}, right). Finally, \Cref{fig:f1_iter} shows $\|D^2_hu_h^n - \mathbf{P}^n_h\|_{L^2(\Omega)}$ as function of $h$ (left) and splitting iteration $n$ (right). The quantity decays when $h\to 0$ and $n$ increases. However, around $100$ iterations are needed to reach convergence for the smallest choiche of $h$. This slow convergence was observed also in \cite{caboussat}.
\begin{figure}[tbp]
\centering
\begin{subfigure}[t]{0.45\linewidth}
\centering
\includegraphics[width=0.84\linewidth]{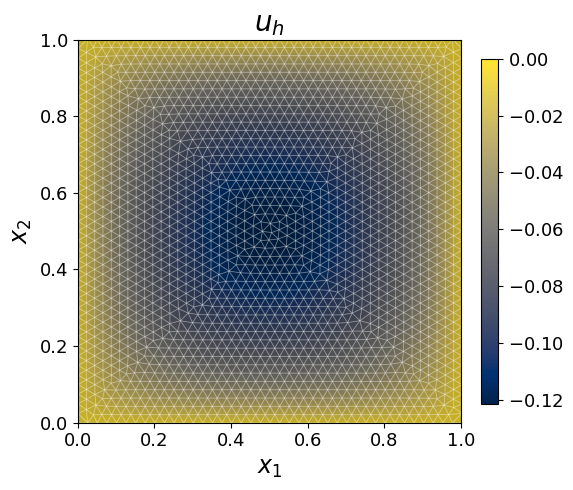}
\end{subfigure}
\begin{subfigure}[t]{0.45\linewidth}
\centering
\includegraphics[width=0.8\linewidth]{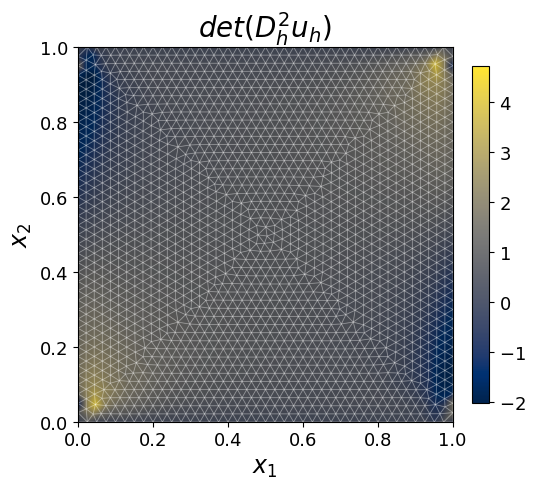}
\end{subfigure}
\caption{Third test problem. Left: plot of the numerical solution $u_h$ ($h = 0.0125$). Right: plot of $\text{\rm det}\:(D^2_hu_h)$ ($h = 0.0125$).}
\label{fig:f1_approx}
\end{figure}

\begin{figure}[tbp]
\centering
\begin{subfigure}[t]{0.42\linewidth}
\centering
\includegraphics[width=0.85\linewidth]{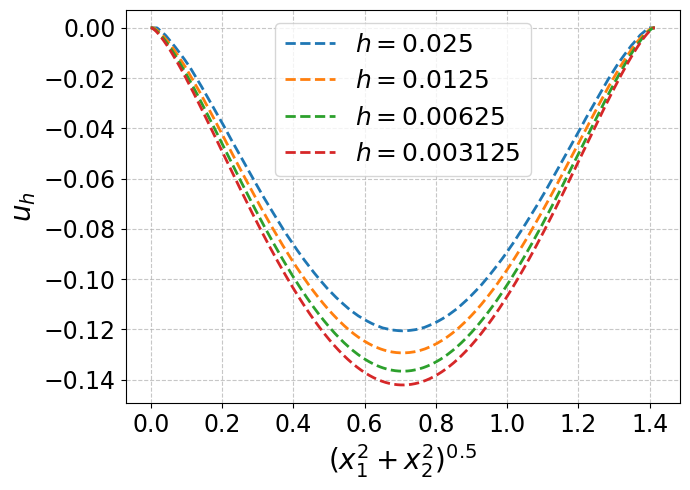}
\end{subfigure}
\hspace{0.3cm}
\begin{subfigure}[t]{0.42\linewidth}
\centering
\includegraphics[width=0.85\linewidth]{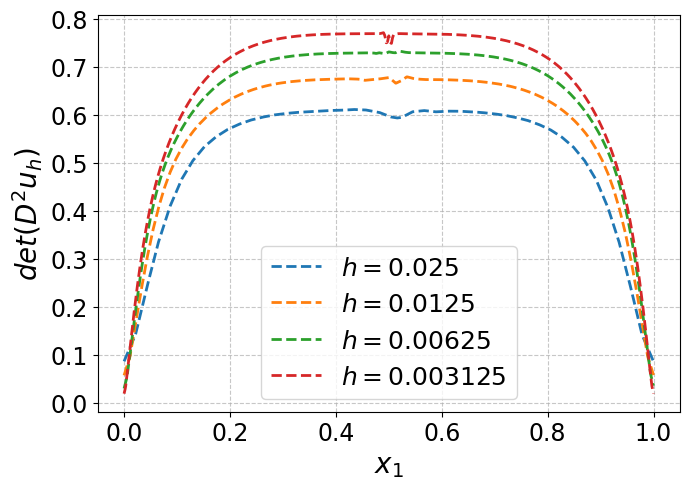}
\end{subfigure}
\caption{Third test problem. Left: plot of the numerical solution $u_h$ along the line $x_2=x_1$. Right: plot of $\text{\rm det}\:(D^2_hu_h)$ along the line $x_2=0.5$.}
\label{fig:f1_values}
\end{figure}

\begin{figure}[tbp]
\centering
\begin{subfigure}[t]{0.42\linewidth}
\centering
\includegraphics[width=0.85\linewidth]{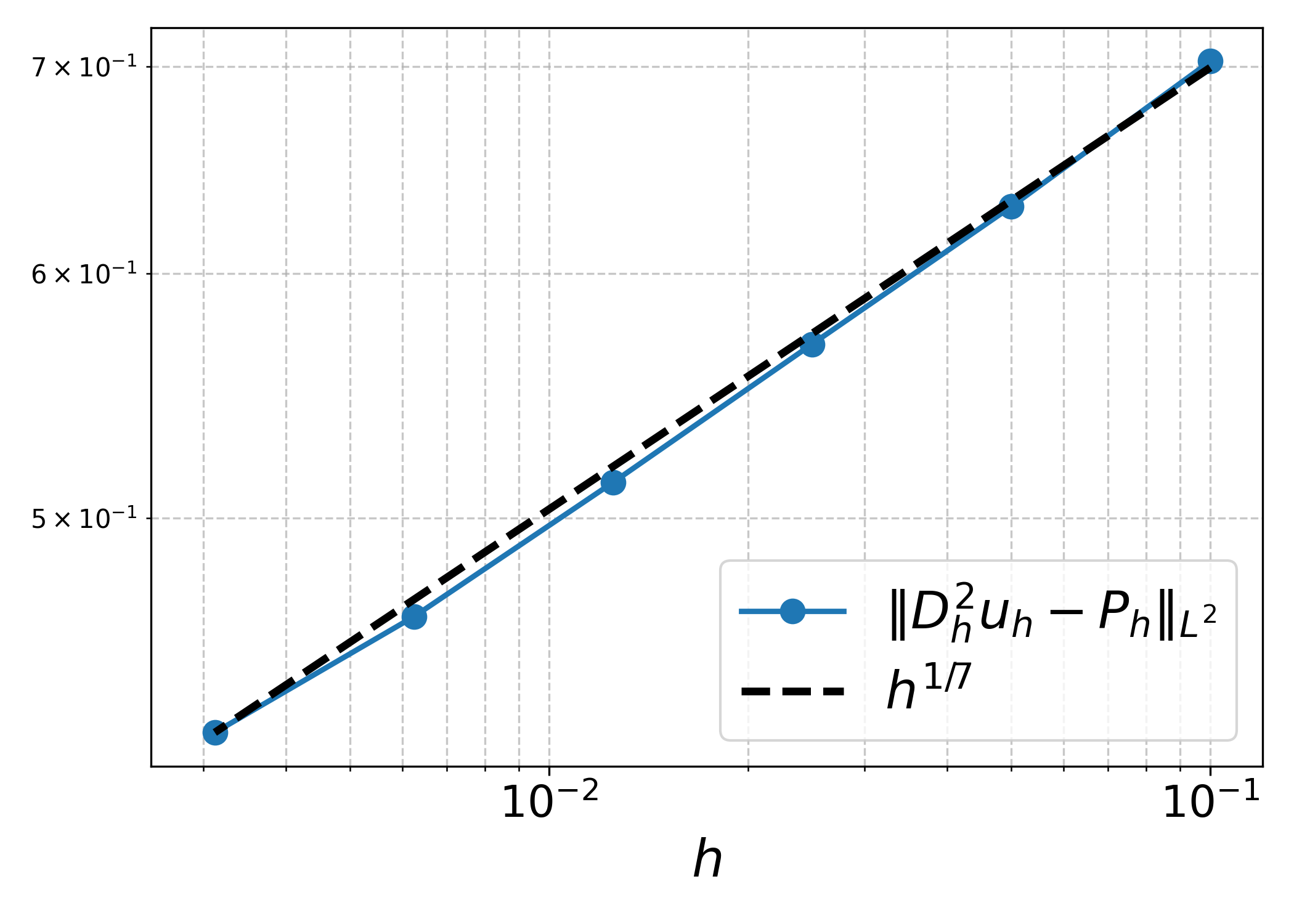}
\end{subfigure}
\hspace{0.3cm}
\begin{subfigure}[t]{0.42\linewidth}
\centering
\includegraphics[width=0.85\linewidth]{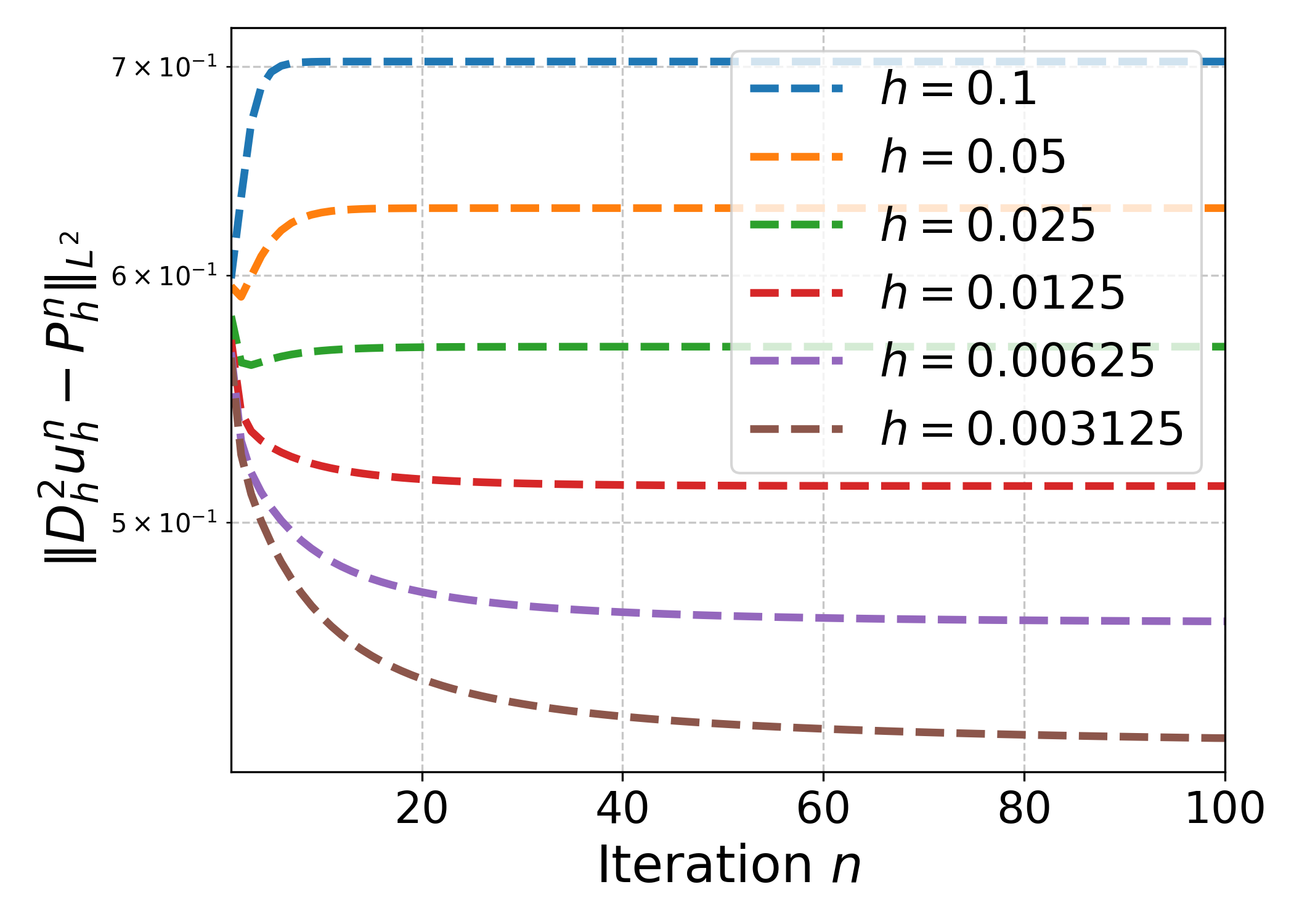}
\end{subfigure}
\caption{Third test problem. Left: $\|D^2_hu_h - \mathbf{P}_h\|_{L^2(\Omega)}$ vs. $h$. Right: $\|D^2_hu_h^n - \mathbf{P}^n_h\|_{L^2(\Omega)}$ vs. splitting iterations for different values of $h$.}
\label{fig:f1_iter}
\end{figure}

\subsubsection{Fourth test case}\label{sssec:example5}
To conclude this section with numerical experiments on non-adapted meshes, we consider a final non-smooth case. The solution of the associated problem is the convex function $u$ defined by
\[
u(x) =  \sqrt{(x_1-0.5)^2 + (x_2-0.5)^2},
\]
a function that does not possess $H^2$ regularity when $(0.5,0.5) \in \Omega$, and satisfies $M u = \pi \delta_{(0.5,0.5)}$, where $M$ denotes the Monge-Amp\`ere measure (see, \textit{e.g.}, \cite{dephilippis,gutierrez}) and $\delta_{(0.5,0.5)}$ is the Dirac measure at $(0.5,0.5)$. In particular we consider the Monge-Amp\`ere problem on $\Omega = [0,1]^2$, and the problem reads:
\begin{equation*}\label{eq:delta}
\begin{cases}
\text{\rm det}\: D^2 u(x_1, x_2) = \pi\delta_{(0.5,0.5)} & \text{in } \Omega, \\
u(x_1, x_2) = \sqrt{(x_1-0.5)^2 + (x_2-0.5)^2} & \text{on } \partial \Omega.
\end{cases}
\end{equation*}
In particular, the solution to this problem is unique. Since our method is suited for strictly positive right-hand sides $f$, as suggested in \cite{caboussat}, we approximate the Dirca measure with $\delta_{(0.5,0.5)} = \frac{\varepsilon^2}{\pi(\varepsilon^2 + (x_1-0.5)^2 + (x_2-0.5)^2)^2}$, where $\varepsilon > 0$ is a small positive number. \Cref{fig:root_central_approx} illustrates the computed solution $u_h$ and the pointwise error for $h=0.025$ and $\varepsilon = 10^{-2}$. As expected, the error concentrates around the point $(0.5,0.5)$. However the least-squares methodology is also able to approximate these singular problems. This is confirmed by the error convergence shown in \Cref{fig:root_central_iter}. For both the error in $L^2$ and $H^1$ norms we recover a decay order of $\mathcal{O}(h)$.
\begin{figure}[tbp]
\centering
\begin{subfigure}[t]{0.45\linewidth}
\centering
\includegraphics[width=0.84\linewidth]{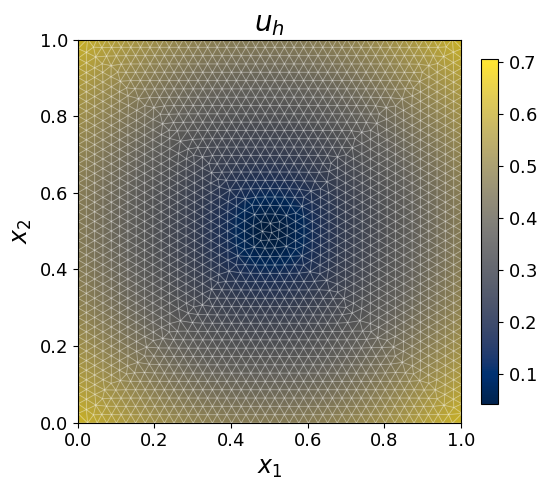}
\end{subfigure}
\begin{subfigure}[t]{0.45\linewidth}
\centering
\includegraphics[width=0.84\linewidth]{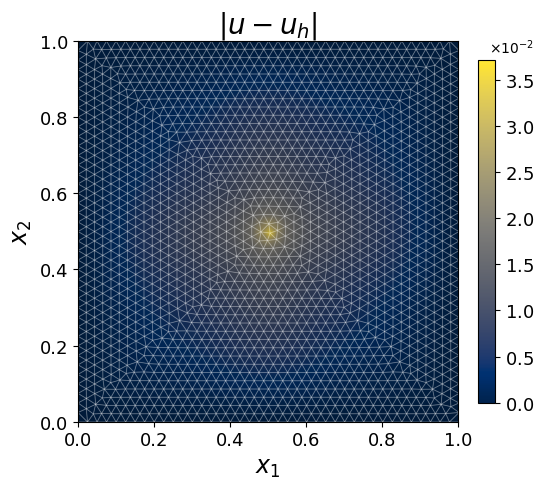}
\end{subfigure}
\caption{Fourth test problem. Left: plot of the numerical solution $u_h$ ($h = 0.025$). Right: pointwise error of the numerical solution $u_h$ ($h = 0.025$).}
\label{fig:root_central_approx}
\end{figure}

\begin{figure}[tbp]
\centering
\begin{subfigure}[t]{0.42\linewidth}
\centering
\includegraphics[width=0.85\linewidth]{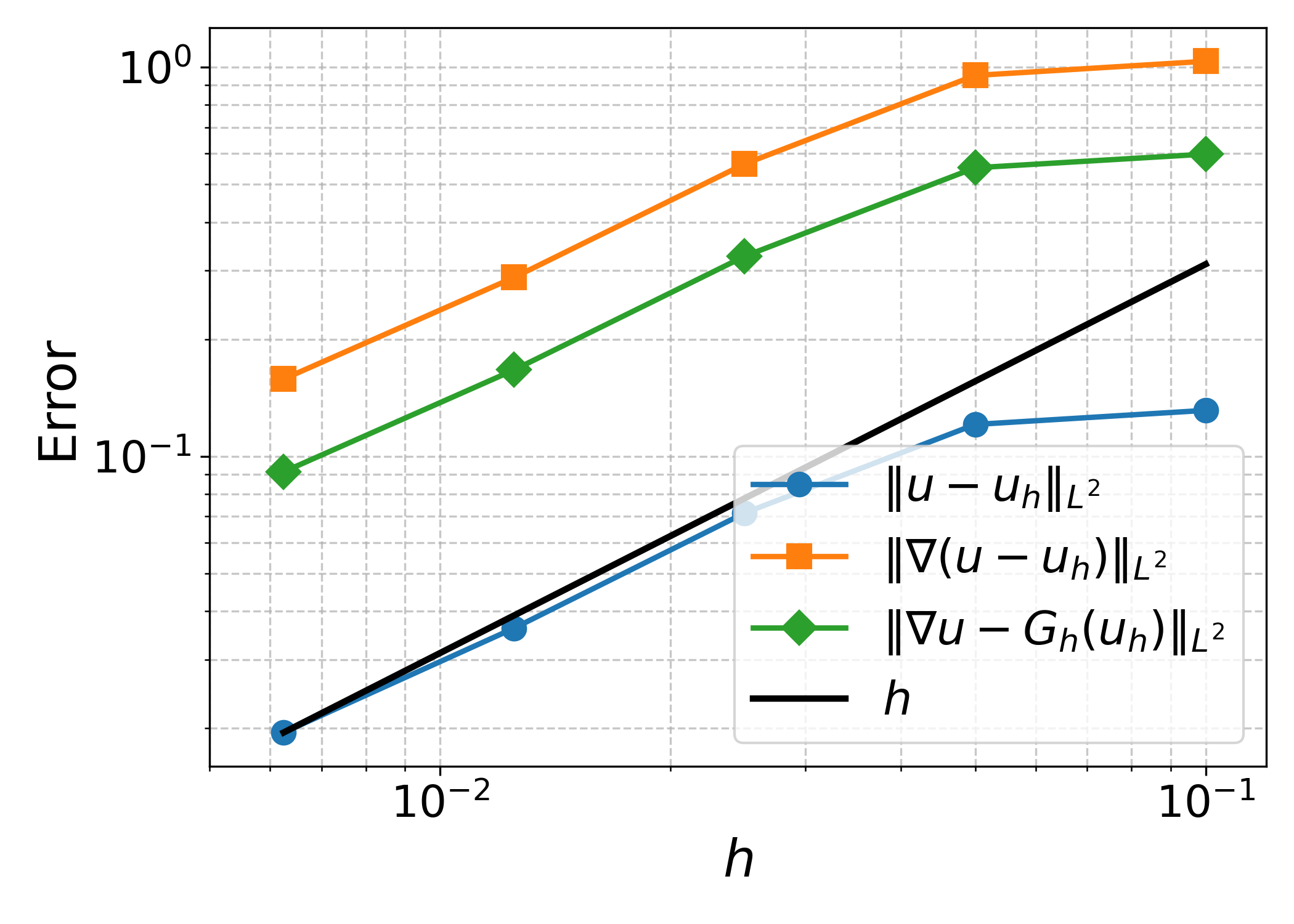}
\end{subfigure}
\hspace{0.3cm}
\begin{subfigure}[t]{0.42\linewidth}
\centering
\includegraphics[width=0.85\linewidth]{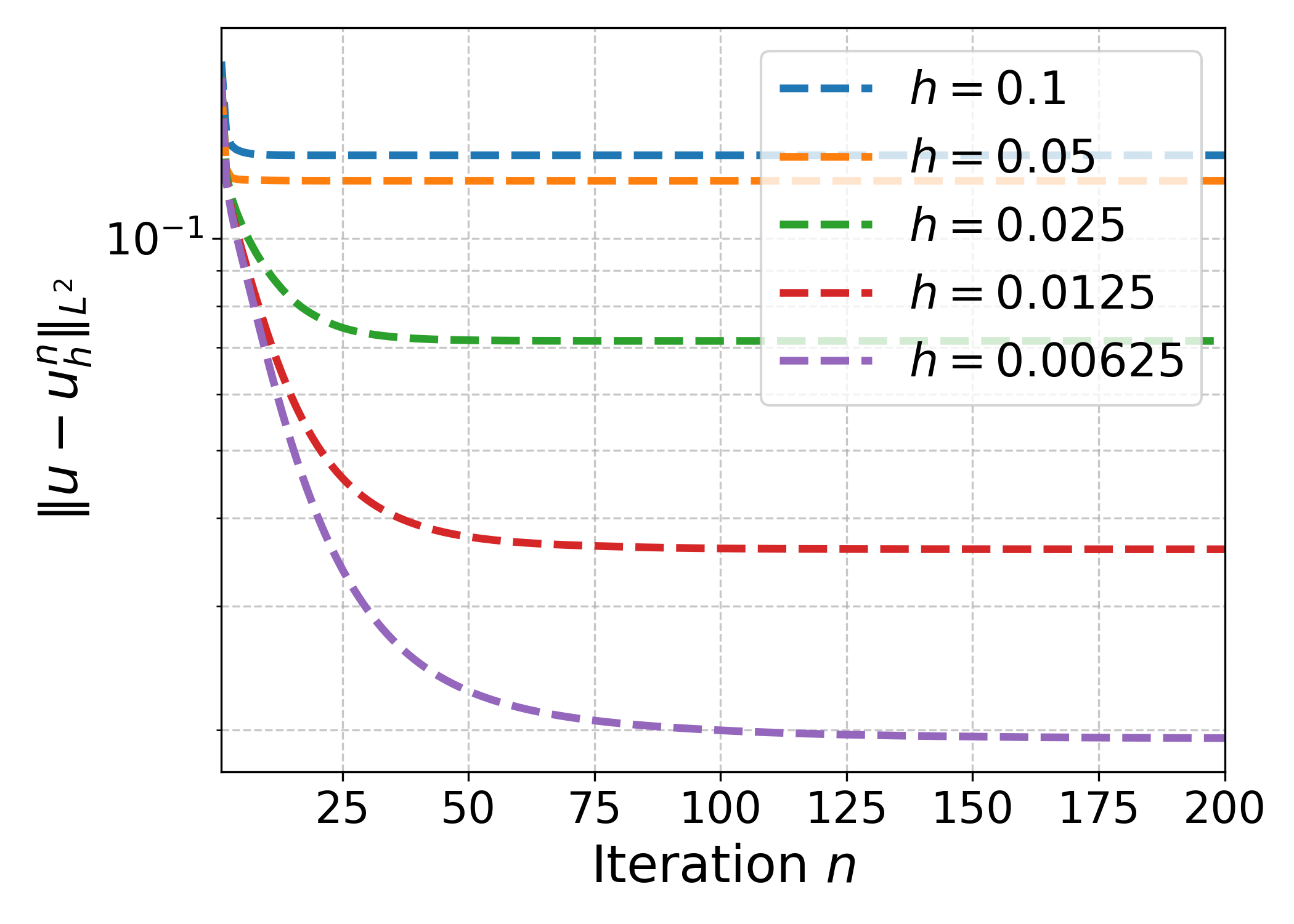}
\end{subfigure}
\caption{Fourth test problem. Left: errors vs. $h$. Right: $\|D^2_hu_h^n - \mathbf{P}^n_h\|_{L^2(\Omega)}$ vs. splitting iterations for different values of $h$.}
\label{fig:root_central_iter}
\end{figure}

\subsection{Numerical results on adapted meshes}\label{ssec:adapt}

We now revisit the numerical experiments presented in \Cref{ssec:nonadapt} to evaluate the performance of the \( H^1 \)-error indicator \( \hat{\eta} \), defined as  
\[
\hat{\eta} := \left( \sum_{K \in \mathcal{T}_h} \hat{\eta}_K^2 \right)^{1/2},
\]  
where each local indicator \( \hat{\eta}_K \) is given by \eqref{eq:etahatK}. According to the numerical results in \Cref{tab:errormin}, the error \( \|\omega - \omega_h\|_{H^{-1}(\Omega)} \) exhibits a convergence rate faster than \( \mathcal{O}(h) \). This suggests that \( \hat{\eta} \) is an appropriate indicator for the \( H^1 \) error, as the contributions from other terms in \eqref{eq:h1erru} are comparatively negligible. The goal of the adaptive algorithm is to generate a sequence of meshes such that the relative estimated error remains close to a prescribed tolerance \( \text{TOL} \), \textit{i.e.},  
\begin{equation*}\label{eq:tol}
0.75\, \text{TOL} \leq  \dfrac{\hat{\eta}}{\|\nabla u_h\|_{L^2(\Omega)}} \leq 1.25\, \text{TOL}.
\end{equation*}
To satisfy the condition \eqref{eq:tol}, it is sufficient to ensure that, for all \( K \in \mathcal{T}_h \),
\[
\frac{0.75^2\, \text{TOL}^2\, \|\nabla u_h\|^2_{L^2(\Omega)}}{N_K} \leq \hat{\eta}_K^2 \leq \frac{1.25^2\, \text{TOL}^2\, \|\nabla u_h\|^2_{L^2(\Omega)}}{N_K},
\]
where \( N_K \) denotes the number of elements in the mesh. Starting from a coarse mesh (\( h = 0.1 \)), the cell \( K \) is refined if \( \hat{\eta}_K^2 \) exceeds the upper bound, and coarsened if it falls below the lower bound; otherwise, the mesh remains unchanged. In practice, each mesh refinement step is performed when the condition \(\| u_h^{n+1} - u_h^n \|_{L^2(\Omega)} \leq 10^{-8}\) is met, which typically occurs within fewer than $50$ splitting iterations. To avoid infinite mesh refinement, we also impose the constraint \(\frac{h_{\max}}{h_{\min}} \leq 40\).

\begin{table}[tbp]
\caption{First test problem with adaptation. Error estimators on non-adapted mesh.}
\label{tab:tabnoadapt_exp}
\centering
\begin{tabular}{c c c c c c } 
 $h$ & $N_v$ & $\|u-u_h\|_{L^2(\Omega)}$ &$\|\nabla(u-u_h)\|_{L^2(\Omega)}$ & $\hat{\eta}$ &$\frac{\hat{\eta}}{\|\nabla(u-u_h)\|_{L^2(\Omega)}}$ \\ 
 \hline
 \hline
 $0.05$ & $491$ & $0.2531$ & $3.3705$ & $15.8763$ &$4.7104$  \\ 
 \hline
 $0.025$ & $1904$ & $0.0632$ & $1.4837$ & $7.8397$ &$5.2840$ \\ 
 \hline
 $0.0125$ & $7498$ & $0.0162$ & $0.6920$ & $3.9149$ &$5.6578$\\ 
\end{tabular}
\end{table}
\begin{table}[tbp]
\caption{First test problem with adaptation. Error estimators on adapted mesh.}
\label{tab:tabadapt_exp}
\centering
\begin{tabular}{c c c c c c } 
 $TOL$ & $N_v$ & $\|u-u_h\|_{L^2(\Omega)}$ &$\|\nabla(u-u_h)\|_{L^2(\Omega)}$ & $\hat{\eta}$ &$\frac{\hat{\eta}}{\|\nabla(u-u_h)\|_{L^2(\Omega)}}$ \\ 
 \hline
 \hline
 $1$ & $51$ & $0.5433$ & $7.2043$ & $36.5689$ &$5.0760$  \\ 
 \hline
 $0.5$ & $157$ & $0.2810$ & $3.4795$ & $16.8033$ &$4.8293$  \\ 
 \hline
 $0.25$ & $748$ & $0.1277$ & $1.5232$ & $7.8739$ &$5.1692$ \\ 
\end{tabular}
\end{table}

\subsubsection{First test case with adaptation}
As a first example, we consider a variation of the example in \Cref{sssec:example1}. Specifically, we take \( u(x_1, x_2) = e^{2(x^2 + y^2)} \), which exhibits a steep gradient near the corner \( (1,1) \). \Cref{tab:tabnoadapt_exp} shows the \( L^2 \) and \( H^1 \) error norms, along with the error indicator for the \( H^1 \) norm on a non-adapted mesh. The effectivity index, defined as \(e_i := \hat{\eta}/\|\nabla(u - u_h)\|_{L^2(\Omega)}\), stabilizes around a value of 5. \Cref{tab:tabadapt_exp} reports the results of the mesh adaptivity algorithm for different values of $TOL$. In this case as well, the effectivity index \( e_i \) stabilizes around 5 and the error halves when the tolerance $TOL$ is halved. Moreover, we observe that the mesh is appropriately refined near the corner \( (1,1) \) (see \Cref{fig:exp_adapt}), and that the adapted mesh achieves a smaller error in the $H^1$ norm with a lower number of vertices (see \Cref{tab:tabnoadapt_exp,tab:tabadapt_exp}).

\begin{figure}[tbp]
\centering
\begin{subfigure}[t]{0.32\linewidth}
\centering
\includegraphics[width=0.9\linewidth]{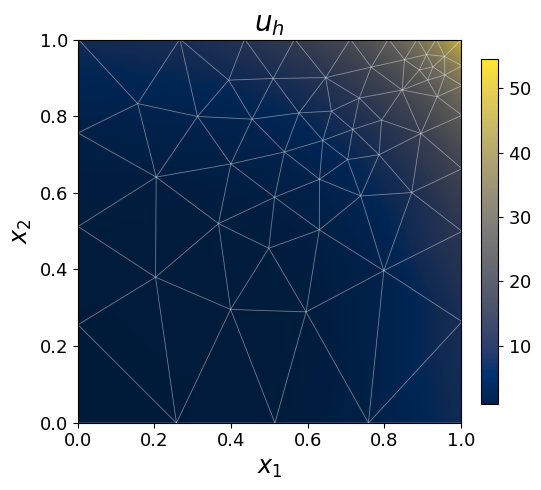}
\caption{$TOL=1$.}
\end{subfigure}
\begin{subfigure}[t]{0.32\linewidth}
\centering
\includegraphics[width=0.9\linewidth]{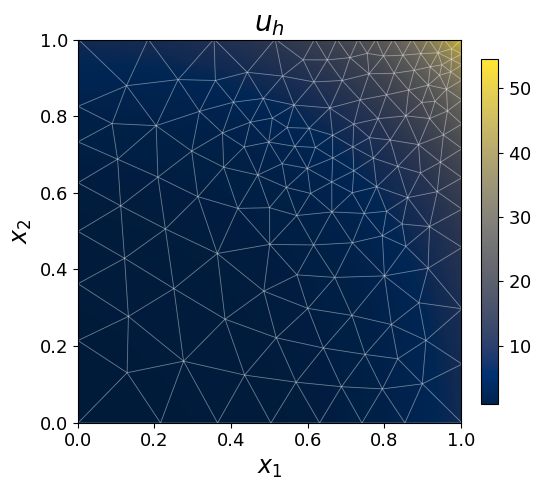}
\caption{$TOL=0.5$.}
\end{subfigure}
\begin{subfigure}[t]{0.32\linewidth}
\centering
\includegraphics[width=0.9\linewidth]{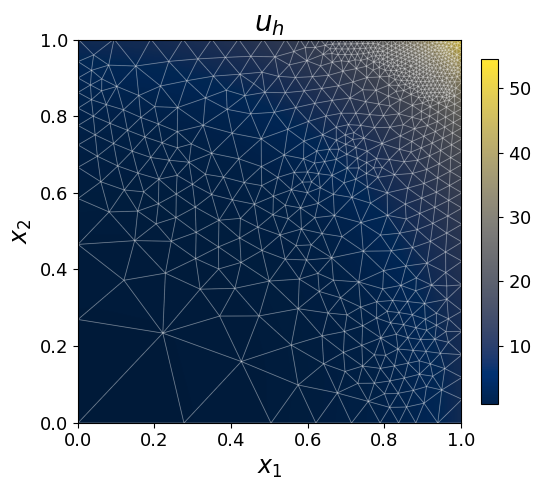}
\caption{$TOL=0.25$.}
\end{subfigure}
\caption{First test problem with adaptation. Plots of the adapted mesh for $TOL=\{1,0.5,0.25\}$.}
\label{fig:exp_adapt}
\end{figure}

\subsubsection{Second test case with adaptation}
We analyze the example presented in the second test case of the previous section (\Cref{sssec:example3}), where \( u(x_1, x_2) = -\sqrt{R^2 - (x_1^2 + x_2^2)} \). We begin by considering the case \( R = 2 \). \Cref{tab:tabnoadapt_root4} shows the \( L^2 \) and \( H^1 \) error norms, along with the error indicator for the \( H^1 \) norm on a non-adapted mesh. The effectivity index stabilizes around 7. \Cref{tab:tabadapt_root4} reports the results of the mesh adaptivity algorithm for different values of $TOL$. The effectivity index remains close to 7, and the \( H^1 \) error is halved when the tolerance is halved. For this value of \( R \), we cannot conclude whether the adapted mesh yields a smaller error. This is likely due to the fact that the solution does not exhibit steep gradients, as in the previous example, and thus a uniform mesh is as appropriate as an adapted one. Next, we consider the limiting case \( R = \sqrt{2} \), to investigate whether the error estimator remains effective when the solution does not belong to \( H^2(\Omega) \). \Cref{tab:tabadapt_rootsqrt2} reports the errors and the value of \( \hat{\eta} \) for the adapted mesh. The effectivity index is approximately 4, and once again, the error is halved when the tolerance is halved. Moreover, compared to the results shown in \Cref{fig:root_sqr2}, where an error in the $H^1$ norm of 10\% could only be achieved with very fine uniform meshes, we now obtain a smaller error using significantly fewer vertices. \Cref{fig:rootsqr2_adapt} shows how the mesh adapts for different values of $TOL$, with refinement concentrated near the singularity at \( (1,1) \).

\begin{table}[tbp]
\caption{Second test problem with adaptation, $R=2$. Error estimators on non-adapted mesh.}
\label{tab:tabnoadapt_root4}
\centering
\begin{tabular}{c c c c c c } 
 $h$ & $N_v$ & $\|u-u_h\|_{L^2(\Omega)}$ &$\|\nabla(u-u_h)\|_{L^2(\Omega)}$ & $\hat{\eta}$ &$\frac{\hat{\eta}}{\|\nabla(u-u_h)\|_{L^2(\Omega)}}$ \\ 
 \hline
 \hline
 $0.05$ & $491$ & $1.69\cdot 10^{-4}$ & $0.0094$ & $0.0678$ &$7.2333$  \\ 
 \hline
 $0.025$ & $1904$ & $4.15\cdot 10^{-5}$ & $0.0046$ & $0.0338$ &$7.2984$ \\ 
 \hline
 $0.0125$ & $7498$ & $1.14\cdot 10^{-5}$ & $0.0023$ & $0.0168$ &$7.3267$\\ 
\end{tabular}
\end{table}
\begin{table}[tbp]
\caption{Second test problem with adaptation, $R=2$. Error estimators on adapted mesh.}
\label{tab:tabadapt_root4}
\centering
\begin{tabular}{c c c c c c } 
 $TOL$ & $N_v$ & $\|u-u_h\|_{L^2(\Omega)}$ &$\|\nabla(u-u_h)\|_{L^2(\Omega)}$ & $\hat{\eta}$ &$\frac{\hat{\eta}}{\|\nabla(u-u_h)\|_{L^2(\Omega)}}$ \\ 
 \hline
 \hline
 $0.5$ & $63$ & $0.0065$ & $0.0386$ & $0.2151$ &$5.5668$  \\ 
 \hline
 $0.25$ & $327$ & $0.0017$ & $0.0150$ & $0.0937$ &$6.2371$ \\ 
 \hline
 $0.125$ & $1291$ & $3.33\cdot 10^{-4}$ & $0.0069$ & $0.0479$ &$6.9131$  \\ 
\end{tabular}
\end{table}
\begin{table}[tbp]
\caption{Second test problem with adaptation, $R=\sqrt{2}$. Error estimators on adapted mesh.}
\label{tab:tabadapt_rootsqrt2}
\centering
\begin{tabular}{c c c c c c } 
 $TOL$ & $N_v$ & $\|u-u_h\|_{L^2(\Omega)}$ &$\|\nabla(u-u_h)\|_{L^2(\Omega)}$ & $\hat{\eta}$ &$\frac{\hat{\eta}}{\|\nabla(u-u_h)\|_{L^2(\Omega)}}$ \\ 
 \hline
 \hline
  $1$ & $40$ & $0.0201$ & $0.1541$ & $0.8313$ &$5.3929$  \\ 
\hline
 $0.5$ & $153$ & $0.0076$ & $0.0797$ & $0.3700$ &$4.6409$  \\ 
 \hline
 $0.25$ & $601$ & $0.0028$ & $0.0445$ & $0.1857$ &$4.1762$ \\ 
\end{tabular}
\end{table}

\begin{figure}[tbp]
\centering
\begin{subfigure}[t]{0.32\linewidth}
\centering
\includegraphics[width=0.9\linewidth]{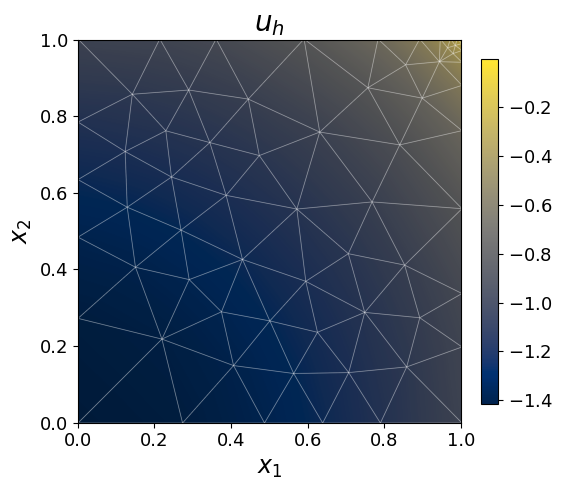}
\caption{$TOL=1$.}
\end{subfigure}
\begin{subfigure}[t]{0.32\linewidth}
\centering
\includegraphics[width=0.9\linewidth]{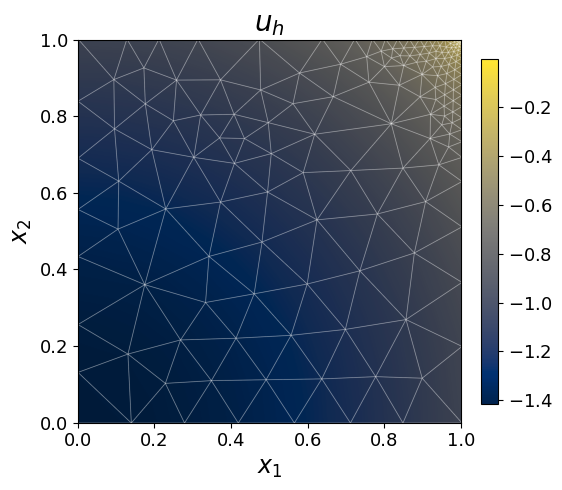}
\caption{$TOL=0.5$.}
\end{subfigure}
\begin{subfigure}[t]{0.32\linewidth}
\centering
\includegraphics[width=0.9\linewidth]{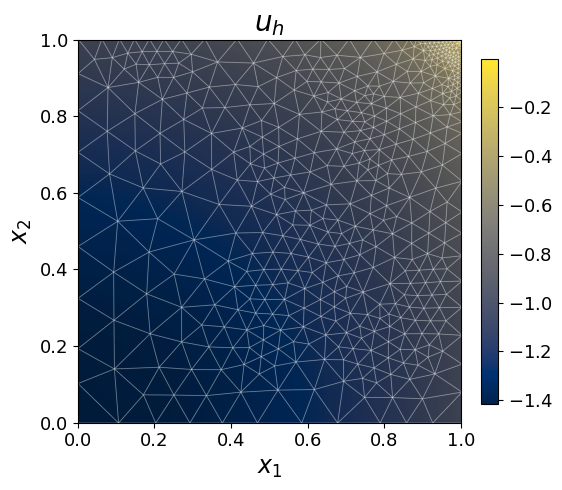}
\caption{$TOL=0.25$.}
\end{subfigure}
\caption{Second test problem with adaptation, $R=\sqrt{2}$. Plots of the adapted mesh for $TOL=\{1,0.5,0.25\}$.}
\label{fig:rootsqr2_adapt}
\end{figure}

\begin{figure}[tbp]
\centering
\begin{subfigure}[t]{0.32\linewidth}
\centering
\includegraphics[width=0.95\linewidth]{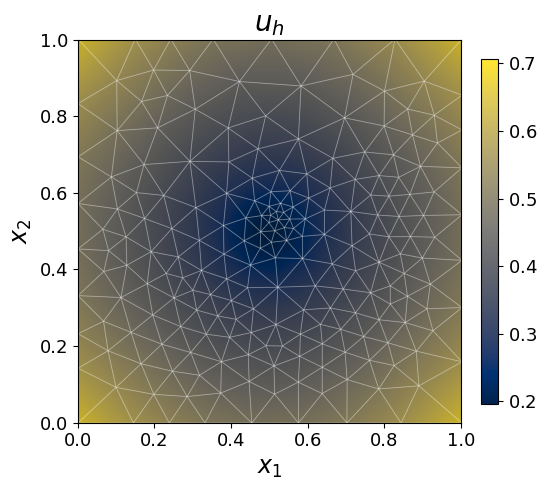}
\caption{$TOL=0.5$.}
\end{subfigure}
\begin{subfigure}[t]{0.32\linewidth}
\centering
\includegraphics[width=0.95\linewidth]{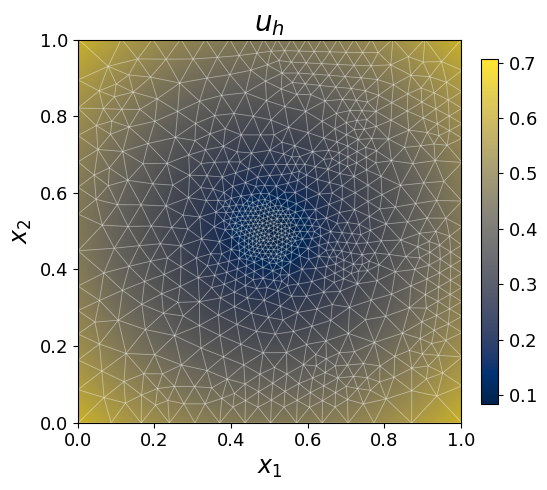}
\caption{$TOL=0.25$.}
\end{subfigure}
\begin{subfigure}[t]{0.32\linewidth}
\centering
\includegraphics[width=0.95\linewidth]{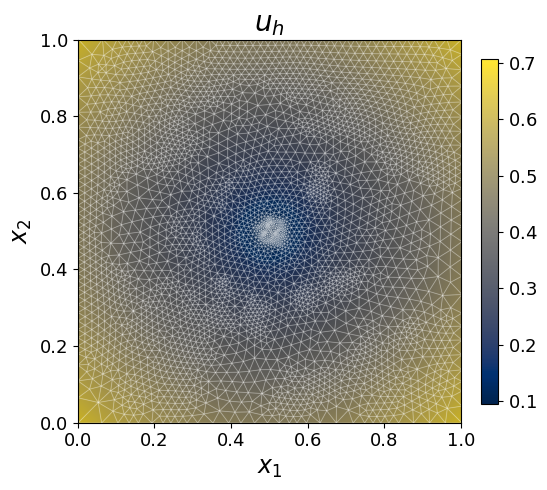}
\caption{$TOL=0.125$.}
\end{subfigure}
\caption{Third test problem with adaptation. Plots of the adapted mesh for $TOL=\{0.5,0.25, 0.125\}$.}
\label{fig:root_central_adapt}
\end{figure}

\begin{table}[tbp]
\caption{Third test problem with adaptation. Error estimators on adapted mesh.}
\label{tab:tabadapt_root_central}
\centering
\begin{tabular}{c c c c c c } 
 $TOL$ & $N_v$ & $\|u-u_h\|_{L^2(\Omega)}$ &$\|\nabla(u-u_h)\|_{L^2(\Omega)}$ & $\hat{\eta}$ &$\frac{\hat{\eta}}{\|\nabla(u-u_h)\|_{L^2(\Omega)}}$ \\ 
 \hline
 \hline
 $0.5$ & $252$ & $0.1224$ & $0.5540$ & $0.2063$ &$0.3724$  \\ 
 \hline
 $0.25$ & $879$ & $0.0842$ & $0.3809$ &$0.1295$  & $0.3398$\\ 
 \hline
 $0.125$ & $3300$ & $0.0454$ & $0.2068$ & $0.0836$ &$0.4043$ \\
\end{tabular}
\end{table}

\subsubsection{Third test case with adaptation}
The next test problem is the one considered in the last example of the previous section (\Cref{sssec:example5}), featuring a singularity located at the center of the domain. As a result, we expect the adaptive mesh refinement algorithm to concentrate elements around the point $(0.5, 0.5)$. \Cref{fig:root_central_adapt} illustrates the final refined meshes for various tolerance values $TOL$, and the results confirm this expected behavior. \Cref{tab:tabadapt_root_central} reports the corresponding numerical results. As the tolerance decreases, both the errors in $H^1$  and $L^2$ norms decrease, indicating effective refinement. However, unlike the previous test cases, we observe that halving the tolerance does not necessarily halve the error. This slower convergence rate may be attributed to the fact that the exact solution $u$ does not belong to $H^2(\Omega)$. The effectivity index stabilizes around $0.4$. The fact that it is smaller than $1$ it is not surprising. Indeed, due to the solution’s reduced regularity, other terms in \Cref{lemma:aposteriori} scale like \(\mathcal{O}(h)\), and \(\hat\eta\) captures only a portion of these.

\section{Conclusions}
We have proposed and analyzed an efficient $\mathbb{P}_1$ finite element method for solving a fully nonlinear elliptic problem, building on the nonlinear least-squares splitting algorithm introduced in \cite{caboussat}. By introducing a direct solver for the fourth-order subproblem \eqref{eq:biharmonic}, we achieve a significant reduction in computational cost by approximately an order of magnitude compared to earlier methods. Our approach is supported by both \textit{a priori} and \textit{a posteriori} error estimates, and enhanced by gradient recovery techniques for improved Hessian approximation. Numerical experiments on the unit square validate the theoretical predictions, demonstrating optimal $\mathcal{O}(h)$ convergence in the $H^2$ norm for smooth solutions, a notable advancement over previous work. In non-smooth scenarios, the method remains robust, yielding convergence in the $L^2$ and $H^1$ norms even when classical regularity assumptions fail. The residual-based \textit{a posteriori} estimator effectively guides adaptive mesh refinement, leading to lower errors for the same number of degrees of freedom, with an observed effectivity index close to $5$ in smooth cases.

Future directions include extending the proposed finite element framework and associated error estimates to other fully nonlinear elliptic equations, such as \textit{e.g.} the Pucci equation. It would also be of interest to generalize the method to different boundary conditions, such as those arising in optimal transport problems, and to consider higher-dimensional domains.

\section*{Acknowledgments}
The authors thank Alexei Lozinski (Universit\'e de Franche-Comt\'e) for fruitful discussions.

\bibliographystyle{unsrt}
\bibliography{biblio}
\end{document}